\theoremstyle{plain}
\newtheorem{theorem}{Theorem}[section]
\newtheorem{proposition}[theorem]{Proposition}
\newtheorem{lemma}[theorem]{Lemma}
\theoremstyle{definition}
\newtheorem{definition}[theorem]{Definition}
\theoremstyle{remark}
\DeclareMathOperator*{\argmax}{arg\,max}
\DeclareMathOperator*{\argmin}{arg\,min}
\pgfplotsset{compat=1.17}
\title{Reimagining Demand-Side Management with Mean Field Learning}
\newcommand{\footremember}[2]{%
    \footnote{#2}
    \newcounter{#1}
    \setcounter{#1}{\value{footnote}}%
}
\newcommand{\footrecall}[1]{%
    \footnotemark[\value{#1}]%
} 
\author{
 Bianca Marin Moreno \\
  Inria\footremember{inria}{Univ. Grenoble Alpes, Inria, CNRS, Grenoble INP,
LJK, 38000 Grenoble, France.}, EDF R$\&$D\footremember{edf}{EDF Lab, 7 bd Gaspard Monge, 91120 Palaiseau, France} \\
  %% examples of more authors
   \And
 Margaux Brégère \\
 Sorbonne Université\footnote{Sorbonne Université LPSM, Paris, France}, EDF R$\&$D\footrecall{edf} \\
  \And
 Pierre Gaillard \\
  Inria\footrecall{inria} \\
  \And
 Nadia Oudjane\\
  EDF R$\&$D\footrecall{edf}\\
  %% \AND
  %% Coauthor \\
  %% Affiliation \\
  %% Address \\
  %% \texttt{email} \\
  %% \And
  %% Coauthor \\
  %% Affiliation \\
  %% Address \\
  %% \texttt{email} \\
  %% \And
  %% Coauthor \\
  %% Affiliation \\
  %% Address \\
  %% \texttt{email} \\
}
\begin{document}

\maketitle

\begin{abstract}
% Integrating renewable energy into the power grid while balancing supply and demand is a complex issue, given its intermittent nature. Demand side management (DSM) offers solutions to this challenge. We propose a new method for DSM, in particular the problem of controlling a large population of electrical devices to follow a desired consumption signal. We model it as a finite horizon Markovian mean field control problem. We develop a new algorithm, MD-MFC, which provides theoretical guarantees for convex and Lipschitz objective functions. What distinguishes MD-MFC from the existing load control literature is its effectiveness in solving a target tracking problem, achieved through a unique regularization function that allows dynamic programming to be used to obtain simple closed-form solutions. In addition, we show that general mean-field game algorithms can be applied to this problem, which expands the possibilities for addressing load control problems. We illustrate our claims with experiments on a realistic data set.

Integrating renewable energy into the power grid while balancing supply and demand is a complex issue, given its intermittent nature. Demand side management (DSM) offers solutions to this challenge. We propose a new method for DSM, in particular the problem of controlling a large population of electrical devices to follow a desired consumption signal. We model it as a finite horizon Markovian mean field control problem. We develop a new algorithm, MD-MFC, which provides theoretical guarantees for convex and Lipschitz objective functions. What distinguishes MD-MFC from the existing load control literature is its effectiveness in directly solving the target tracking problem without resorting to regularization techniques on the main problem. A non-standard Bregman divergence on a mirror descent scheme allows dynamic programming to be used to obtain simple closed-form solutions. In addition, we show that general mean-field game algorithms can be applied to this problem, which expands the possibilities for addressing load control problems. We illustrate our claims with experiments on a realistic data set.

\end{abstract}

\section{Introduction}
Climate change is a complex problem, for which action takes many forms. Of the top 20 solutions identified by \citet{drawdown} to reverse global warming, six are related to the energy sector, including integrating renewables into the electricity system and increasing the number of electric vehicles as a primary mode of transportation. In addition, the study managed by \citet{RTE} showed that achieving carbon neutrality by 2050 in the French electricity scenario requires a decrease in final energy consumption and strong growth in renewable energy. 

However,  it is extremely difficult to make these solutions economically viable while also scaling them up. The intermittent nature of renewable energy sources can cause significant fluctuations in energy demand and supply, which may impact the balance of the power grid. Current solutions to keep the system in balance rely heavily on fossil fuel power plants, which have significant environmental costs, or on energy imports, which have capital and operating costs. Demand Side Management (DSM) are strategies to reduce energy acquisition costs and associated penalties by continuously monitoring energy consumption and managing devices \citep{bakare_comprehensive_2023}, which provides flexibility and improves the reliability of energy systems. Yet, implementing DSM solutions is challenging, as it involves large-scale data processing and near real-time scenarios. For this reason, machine learning solutions have recently emerged to solve DSM problems \citep{AI_for_dsm} with examples ranging from using multi-armed bandits to develop pricing solutions \citep{bregere}, to deep learning models for smart charging of electric vehicles \citep{deep_l_vehicles}.

The goal of this paper is to make a new contribution to this field by proposing a new solution to a DSM problem concerning the control of thermostatically controled loads (TCLs: flexible appliances such as water heaters, air conditioners, refrigerators, etc). The aim is to control the aggregate power consumption of a large population of water heaters in order to follow a target consumption profile. To this end, we consider a finite time horizon Markovian mean field control (MFC) problem, and we propose a new algorithm based on mirror descent. We also adapt other mean field learning algorithms from the literature for this purpose. 

\paragraph{Contributions} In this paper, we propose and compare two new approaches to solve a DSM problem: a new algorithm, MD-MFC, for general Markovian MFC problems, and an adaptation of existing algorithms in the mean field learning literature for game problems. First we provide a modeling of the management system in question as a Markov decision process (MDP) in Section~\ref{model}. The literature review of previous modeling and solutions to the load control problem, as well as a discussion of the main ingredient of our algorithms, mean field learning, are postponed to Subection~\ref{litterature}. Our main results are stated in Section~\ref{algorithm}: we introduce the MD-MFC algorithm for a Markovian MFC problem, and we prove a convergence rate of order $\smash{1/\sqrt{K}}$, where $K$ is the number of iterations, by linking it to a mirror descent \citep{md_original} scheme. This implies a non-trivial reformulation of a non-convex problem in a measure space into a convex problem. A good choice of non-standard regularization allows dynamic programming \citep{DP_principles}. This results in the first algorithm that efficiently and directly solves the target tracking problem without resorting to regularization techniques in the main problem. Section~\ref{experiments} illustrates the results with simulations based on a realistic data set \citep{smach}. A series of future works concludes the paper.

\section{Setting and model}\label{model}
Our framework consists in modeling the random dynamics of a population of water heaters in order to control their average consumption to follow a target signal. From now on, for a finite set $S$, we define $\Delta_S$ to be the simplex of dimension $|S|$, the cardinal of $S$.

\subsection{Randomized controlled dynamics for one water heater}\label{subsec:random}

Let us consider a discretisation of the time for $n = 1,...,N$. At each time step $n$, the state of a water heater is described by a variable $x_n = (m_n, \theta_n) \in \mathcal{X} := \{0,1\} \times \Theta$, where $m_n$ indicates the operating state of the heater (ON if $1$, OFF if $0$), and $\theta_n$ represents the average temperature of the water in the tank. For the sake of simplification we consider only temperatures inside a finite set $\Theta$.

We call the uncontrolled dynamics the nominal dynamics. A water heater that follows the nominal dynamics \citep{ana_busic} obeys a cyclic ON/OFF decision rule with a deadband to ensure that the temperature is between a lower limit $T_\text{min}$ and an upper limit $T_\text{max}$. Thus, if the water heater is turned on, it heats water with the maximum power until its temperature exceeds $T_\text{max}$. Then, the heater turns off and the water temperature decreases until it reaches $T_\text{min}$, where the heater turns on again and a new cycle begins. The nominal dynamics at a discretized time is illustrated in Figure~\ref{fig:nominal_temp_example}. The temperature at each time step is calculated by approximating an ordinary differential equation (ODE) depending on the current operating state of the heater and the hot water drawn at each time step (see Appendix~\ref{nominal_behavior}). We assume that the event of a water withdrawn is random and independent at each time step with a known probability distribution.  

\begin{figure}[ht]
% \vspace{.3in}
\centerline{\input{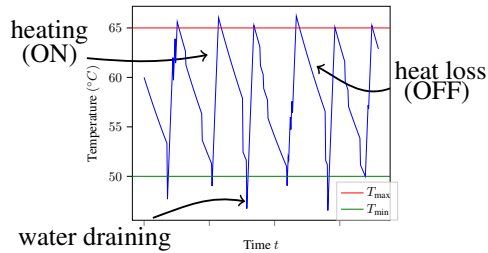}}
% \vspace{.3in}
\caption{Temperature evolution of a water heater following the nominal dynamics.}
\label{fig:nominal_temp_example}
\end{figure}

In order to have a controllable model, we fit the nominal dynamics of a water heater to a Markov decision process. The finite state space is given by $\mathcal{X}$, and we consider an action space given by $\mathcal{A} := \{0,1\}$. At time step $n$, choosing action $1$ means turning the heater on except when $\theta_n \geq T_\text{max}$. Conversely, choosing action $0$ means turning the heater off except when $\theta_n \leq T_\text{min}$.  The nominal dynamics deterministically chooses action $0$ if the heater is off and $1$ if it is on, independent of the heater's temperature. Unlike the nominal dynamics, we want to consider stochastic strategies for choosing actions. If the heater is in state $x_n = (m_n, \theta_n)$, the next temperature is computed by $\theta_{n+1} := T(m_n, \theta_n, \epsilon_n)$ where $T$ is a function defined as the solution of Equation~\eqref{temperature_ode} of Appendix~\ref{nominal_behavior} and $\epsilon_n$ is the random variable corresponding to a water-withdraw event at time step $n$. The action $a_n$ is sampled with probability $\pi_n(\cdot|x_n) \in \Delta_\mathcal{A}$, and the next operating state is given by $m_{n+1} := M(a_n, \theta_{n+1})$, where
\begin{equation*}
    M(a_n, \theta_{n+1}) :=  a_n \mathds{1}_{\theta_{n+1} \in [T_\text{min}, T_\text{max}]} + \mathds{1}_{\theta_{n+1} < T_\text{min}}.
\end{equation*}

Hence, the probability kernel for each time step $n$ is given by 
\begin{equation*}
    {p_{n+1}(x_{n+1}|x_n,a_n) :=\mathbb{P}\big(\theta_{n+1} = T(m_n,\theta_n,\epsilon_n) | \theta_n, m_n \big) \mathbb{P}\big(m_{n+1} = M(a_n,\theta_{n+1}) | a_n, \theta_{n+1}\big)}.
\end{equation*}
Moreover if $\theta_{n+1} \in [T_\text{min}, T_\text{max}]$, the action $a_n \sim \pi_n(\cdot|x_n)$ defines the next operating state of the heater. For more details on our modeling of the dynamics of a water heater, see Appendix~\ref{nominal_behavior}.

\subsection{Optimisation problem}
Consider a population of $M$ water heaters indexed by $i$ and described at time step $n$ by $X_n^i = (m_n^i, \theta_n^i)$ following the randomized dynamics described in Subsection~\ref{subsec:random}. We suppose all water heaters to be homogeneous, i.e. they have the same dynamics, and follow the same policy $\pi$. Let $\smash{\bar{m}_n := \frac{1}{M} \sum_{i=1}^M m_n^i}$ denote the average consumption. We assume for simplicity that the maximum power of each water heater is $p_\text{max} = 1$ so that the average consumption is equal to the proportion of heaters at state ON. Note that $\bar{m}_n$ depends on the policy $\pi$ that the water heaters follow, thus we can denote it as $\bar{m}_n(\pi)$. Let ${\gamma = (\gamma_n)_{1 \leq n \leq N} \in [0,1]^N}$ be our target consumption profile (for example, the energy production at each time step divided by the number of devices). Our goal is to solve the problem
\begin{equation}\label{opt_finite_heaters}
    \min_{\pi \in (\Delta_\mathcal{A})^{\mathcal{X} \times N}} \mathbb{E} \left[ \sum_{n=1}^N (\bar{m}_n(\pi) - \gamma_n)^2 \right],
\end{equation}
where we have chosen to work with a quadratic loss.

Let $\mu := (\mu_n)_{n \in [0,...,N]}$ such that $\mu_n$ is the state-action distribution of the entire population of heaters at time $n$. We denote by $\mu^\pi$ a state-action distribution sequence induced by a policy sequence $\pi$ such as in Definition~\ref{mu_induced_by_pi}.

\begin{definition}[Distribution induced by a policy $\pi$]\label{mu_induced_by_pi}
Given an initial distribution $\mu_0$ fixed, the state-action distributions sequence induced by the policy sequence $\pi = (\pi_n)_{1 \leq n \leq N}$ is denoted $\mu^\pi := (\mu^\pi_n)_{1 \leq n \leq N}$ and is defined recursively by  
\begin{align*}
    \begin{split}
        & \mu_0^\pi(x',a') := \mu_0(x',a') \\
        & \mu_{n+1}^{\pi} (x',a') := \sum_{x\in\mathcal{X}} \sum_{a \in\mathcal{A}} \mu_n^\pi(x,a) p_{n+1}(x'| x, a) \pi_{n+1}(a' | x').
    \end{split}
\end{align*}
\end{definition}

For a function $\varphi: \mathcal{X} \rightarrow \mathbb{R}$, we define  ${\mu_n(\varphi) := \sum_x \varphi(x) \mu_n(x, a)}$ for all $1 \leq n \leq N$. We are particularly interested in a function $\varphi$ such that $\mu_n(\varphi)$ gives us the average consumption of our water heater's population. Thus, we consider from now on
\begin{equation}\label{varphi_func}
\begin{split}
    \varphi: \quad \mathcal{X} &\rightarrow \mathbb{R} \\
    (m,\theta) &\mapsto m.
\end{split}
\end{equation}
For such a function $\varphi$, when $M \to \infty$, the mean field approximation \citep{chaos_mfg} of Problem~\eqref{opt_finite_heaters} consists in the main mean field control problem considered in the paper, and is given by 
\begin{equation}\label{main_optimisation_problem}
    \min_\pi F(\mu^\pi) := \sum_{n=1}^N f_n(\mu^\pi_n),
\end{equation}
where ${f_n(\mu_n^\pi) := (\mu_n^\pi(\varphi) - \gamma_n)^2}$. 

It is important to mention that the algorithms and results presented in Section~\ref{algorithm} for solving Problem~\eqref{main_optimisation_problem} remain valid for any general finite horizon Markovian MFC problem with finite state and action spaces, where the cost functions $f_n$ are convex and Lipschitz with respect to the $\|\cdot\|_1$ norm. 

\subsection{Literature discussion}\label{litterature}

Decision-making problems formulated as such mean-field models are a popular framework for stochastic optimization problems in many applications, ranging from robotics \citet{Shiri_auto, robotics_survey} to finance \citet{Achdou_eco, finance01}, energy management \citet{energy01, kl_ana_busic}, epidemic modeling \citet{health01}, and more recently, machine learning \citet{mean_field_ml01, mean_field_ml02, mfg_ml_03, mgf_ml_04}. Thus, although this paper focuses on a demand management problem, our results also provide a new approach to solving problems in many other areas. 

\paragraph{Load control}
Controlling the sum of the consumption of a large number of TCLs started being investigated around $1980$ by \citet{tcl_01, malhame_85, tcl_02} establishing the first physically based modeling for a TCL population. In the works of \citet{malhame_kiz_2013, malhame_kiz_2014}, the difficulty due to the large number of devices is circumvented by a mean field approximation.

For water heater control, \citet{ana_busic_KL_0} use a quadratic objective and a Kullback-Leibler (KL) penalty allowing a Lagrangian approach that learns both the control and the probability transition kernel, but cannot handle uncontrolled state parts, so uncertainties like water withdrawals must be modeled as deterministic. More recently, \citet{ana_busic_kl} takes into account the uncontrolled stochastic environment in the KL quadratic control framework by adding constraints on the probability transition kernel. It is the KL penalty on the main problem that allows them to obtain their main results. However, there is a trade-off between adding the KL and obtaining a good target tracking curve. We therefore propose to solve directly the same quadratic control framework but without the KL penalty. We have successfully provided the first algorithm for direclty solving the target tracking problem. 

\paragraph{Mean field learning}

 Mean field games (MFG) have been introduced by \citet{MFG_original} and \citet{huang_malhame_mfg} to tackle the issue of games with a large number of symmetric and anonymous players, by passing to the limit of an infinite number of players interacting through the population distribution. Although MFG focuses on finding Nash equilibria (NE), social optima on cooperative setting have also been studied under the term of mean field control (MFC) \citep{mfc}. 

Lately, iterative learning methods such as fictitious play and online mirror descent have been adapted to the MFG scenario in \citet{FP_MFG_finitespace} and \citet{OMD_perolat}. \citet{concave_utility} show an equivalence between Frank Wolfe's classical optimization algorithm \citep{frank_wolfe} and the fictitious play for potential structured games. Similarly, we show an equivalence between our MFC problem and potential games, that open up a new range of solutions to the DSM problem considered using the above studies.   

\section{Main results: algorithmic approaches}\label{algorithm}
% This section presents a new algorithm for solving Problem~\eqref{main_optimisation_problem} inspired by mirror descent, as well as a discussion of the reformulation of Problem~\eqref{main_optimisation_problem} into a potential game.

\subsection{Building a new algorithm}
Consider the set of state-action distributions sequences initialized at $\mu_0 \in \Delta_{\mathcal{X} \times \mathcal{A}}$ and satisfying a specific constrained evolution given by
\begin{equation}\label{set_M}
    \begin{split}
        \mathcal{M}_{\mu_0} :=  \bigg\{ \mu \in 
        (\Delta_{\mathcal{X} \times \mathcal{A}})^N  \big| \; &\sum_{a' \in \mathcal{A}} \mu_{n+1}(x',a')  \\
        &= \sum_{x \in \mathcal{X} , a \in \mathcal{A}} p_{n+1}(x'|x,a) \mu_{n}(x,a)\;, \forall x' \in\mathcal{X}, 
         \forall n \in [0,...,N]\bigg\}.
    \end{split} 
\end{equation}
The set $\smash{\mathcal{M}_{\mu_0}}$ describes the sequences of state-action distribution respecting the dynamics of the Markov model. Furthermore, this set is convex \citep{ana_busic_kl}.

\begin{proposition}\label{opt_mu_equal_pi}
Let $\mu_0 \in \Delta_{\mathcal{X} \times \mathcal{A}}$. The application ${\pi \mapsto \mu^{\pi}}$ is a surjection from $(\Delta_\mathcal{A})^{\mathcal{X} \times N}$ to $\mathcal{M}_{\mu_0}$.
\end{proposition}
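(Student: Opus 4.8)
The plan is to establish the two halves of the surjectivity claim separately: first that the map $\pi \mapsto \mu^{\pi}$ really does land inside $\mathcal{M}_{\mu_0}$ (so that it is a well-defined map \emph{to} $\mathcal{M}_{\mu_0}$), and then that every element of $\mathcal{M}_{\mu_0}$ is attained. The first half is a direct computation; the substantive part is the second, which I would handle by exhibiting an explicit preimage, namely the policy obtained by \emph{conditioning} a given $\mu$ on the state.

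For the first half, fix a policy $\pi$. A short induction on $n$ using Definition~\ref{mu_induced_by_pi} shows each $\mu_n^{\pi}$ is a probability distribution on $\mathcal{X}\times\mathcal{A}$. To check the defining constraint of $\mathcal{M}_{\mu_0}$, I sum the recursion over $a'$ and pull out the policy factor:
\begin{align*}
\sum_{a'\in\mathcal{A}}\mu_{n+1}^{\pi}(x',a')
&=\sum_{x,a}\mu_n^{\pi}(x,a)\,p_{n+1}(x'|x,a)\sum_{a'\in\mathcal{A}}\pi_{n+1}(a'|x')\\
&=\sum_{x,a}p_{n+1}(x'|x,a)\,\mu_n^{\pi}(x,a),
\end{align*}
where the last step uses $\sum_{a'}\pi_{n+1}(a'|x')=1$. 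This is exactly the constraint in \eqref{set_M}, so $\mu^{\pi}\in\mathcal{M}_{\mu_0}$.

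For surjectivity, take any $\mu\in\mathcal{M}_{\mu_0}$ and define a candidate policy by conditioning: for each $n$ and $x'$, set $\pi_{n+1}(a'|x'):=\mu_{n+1}(x',a')/\sum_{a''}\mu_{n+1}(x',a'')$ whenever the denominator is positive, and let $\pi_{n+1}(\cdot|x')$ be arbitrary (say uniform) otherwise. I then prove $\mu^{\pi}=\mu$ by induction on $n$, the base case $\mu_0^{\pi}=\mu_0$ holding by definition since both start from the common fixed initial distribution. Assuming $\mu_n^{\pi}=\mu_n$, the recursion gives $\mu_{n+1}^{\pi}(x',a')=\bigl(\sum_{x,a}p_{n+1}(x'|x,a)\mu_n(x,a)\bigr)\pi_{n+1}(a'|x')$; replacing the parenthesized sum by $\sum_{a''}\mu_{n+1}(x',a'')$ via the $\mathcal{M}_{\mu_0}$ constraint and inserting the definition of $\pi_{n+1}$ collapses the expression to $\mu_{n+1}(x',a')$.

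The one point that needs care — and the only real obstacle — is the degenerate case where the state marginal $\sum_{a''}\mu_{n+1}(x',a'')$ vanishes, so that the conditional policy is undefined and chosen arbitrarily. Here one argues that the arbitrary choice is harmless: the same constraint forces $\sum_{x,a}p_{n+1}(x'|x,a)\mu_n(x,a)=0$, hence $\mu_{n+1}^{\pi}(x',a')=0$ regardless of $\pi_{n+1}(a'|x')$; and since a sum of nonnegative terms vanishes only if each term does, $\mu_{n+1}(x',a')=0$ as well, so both sides agree. This closes the induction and shows the constructed $\pi$ is a preimage of $\mu$, proving surjectivity.
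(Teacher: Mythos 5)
Your proof is correct and takes essentially the same route as the paper's: construct the candidate preimage by conditioning, $\pi_{n}(a|x) = \mu_{n}(x,a)/\sum_{a'}\mu_{n}(x,a')$, and verify $\mu^{\pi}=\mu$ by induction on $n$ using the defining constraint of $\mathcal{M}_{\mu_0}$. If anything, yours is slightly more complete, since you also check that $\pi\mapsto\mu^{\pi}$ actually lands in $\mathcal{M}_{\mu_0}$ and you spell out why the arbitrary policy choice at states of zero marginal mass is harmless, two points the paper treats implicitly or only tersely.
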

The idea of the proof of Proposition~\ref{opt_mu_equal_pi}, reported to Appendix~\ref{missing_proofs}, is that one can retrieve the policy sequence $\pi$ inducing the state-action distribution sequence $\mu$ by taking $\smash{\pi_n(a|x) = \frac{\mu_n(x,a)}{\rho_n(x)}}$, where $\smash{\rho_n(x) := \sum_{a \in \mathcal{A}} \mu_n(x,a)}$. Let $\smash{\mathcal{M}_{\mu_0}^*}$ denotes the subset of $\smash{\mathcal{M}_{\mu_0}}$ where the corresponding policies $\pi$ are such that $\pi_n(a|x) \neq 0$ for all $(x,a) \in \mathcal{X} \times \mathcal{A}$ and $1 \leq n \leq N$. We define the regularization function $\smash{\Gamma : \mathcal{M}_{\mu_0} \times \mathcal{M}_{\mu_0}^* \to \mathbb{R}}$ as
\begin{equation}\label{gamma}
       \Gamma(\mu^\pi, \mu^{\pi'}) := \sum_{n=1}^{N} \mathbb{E}_{(x,a) \sim \mu{^\pi}_n(\cdot)}\bigg[\log\bigg(\frac{\pi_{n}(a|x)}{\pi'_{n}(a|x)}\bigg)\bigg].
\end{equation}
Before giving a solution to Problem~\eqref{main_optimisation_problem}, we consider the following auxiliary optimization problem, which will later help us build the new algorithm. This iterative scheme is possible thanks to Proposition~\ref{opt_mu_equal_pi} which guarantees the existence of a strategy $\pi$ for any $\smash{\mu \in \mathcal{M}_{\mu_0}}$. Here, $k$ represents an iteration:
\begin{equation}\label{MD_opt_problem}
\begin{split}
\mu^{k+1} \in \argmin_{\mu^\pi\in\mathcal{M}_{\mu_0}}
\bigg\{
\langle \nabla F(\mu^k),\mu^\pi\rangle 
+\frac{1}{\tau_k}  \Gamma(\mu^\pi, \mu^k)
\bigg\}.
\end{split}
\end{equation}
We consider $\tau_k > 0$ and $\smash{\langle \nabla F(\mu^k),\mu^\pi\rangle := \sum_{n=1}^N \langle \nabla f_n(\mu_n^k), \mu_n^\pi \rangle}$. At iteration $k+1$, we want to find $\mu^\pi$ by minimizing a linearization of $F$ around $\mu^k$, the distribution sequence found at the previous iteration, and at the same time penalizing the distance between $\pi$ generating $\mu^\pi$ and $\pi^k$ generating $\mu^k$. Choosing this non-standard regularization $\Gamma$ in Equation~\eqref{gamma} instead of the traditional KL divergence on marginal state-action distributions is what enables us to obtain a simple closed-form solution for the iterative scheme. Later we show that $\Gamma$ is a Bregman divergence. Thus, the use of $\Gamma$ brings a significant improvement to the solution of MFC problems because it allows to obtain low complexity solutions with theoretical bounds, as we will prove later.

Let for all $(x_n, a_n, \mu_n) \in \mathcal{X} \times\mathcal{A} \times \Delta_{\mathcal{X} \times\mathcal{A}}$, ${r_n(x_n, a_n, \mu_n) := - \nabla f_n(\mu_n)(x_n, a_n)}$. We show in Theorem~\ref{MD_explicit_result} that, due to the choice of penalizing strategies, the iterative scheme in Equation~\eqref{MD_opt_problem} can be solved through dynamic programming \citep{DP_principles} by building a Bellman recursion:
\begin{theorem}\label{MD_explicit_result}
Let $k \geq 0$. The solution of Problem~\eqref{MD_opt_problem} is $\mu^{k+1} = \mu^{\pi^{k+1}}$ (as in Definition~\ref{mu_induced_by_pi}), where for all ${1 \leq n \leq N}$, and $(x,a) \in \mathcal{X} \times \mathcal{A}$, 
\begin{equation}\label{policy_update}
    \pi_{n}^{k+1}(a|x) := \frac{\pi_{n}^{k}(a|x) \exp\left(\tau_k \tilde{Q}_{n}^{k}(x,a) \right)}{\sum_{a' \in \mathcal{A}}\pi_{n}^{k}(a'|x) \exp\left(\tau_k \tilde{Q}_{n}^{k}(x,a') \right)},
\end{equation}
where $\tilde{Q}$ is a regularized $Q$-function satisfying the following recursion
\begin{equation}\label{Bellman_Q_tilde}
    \begin{cases}
        \Tilde{Q}^k_N(x,a) = r_N(x,a, \mu_N^k) \\
        \!\begin{aligned}
            \Tilde{Q}^k_n(x,a) &= \max_{\pi_{n+1} \in (\Delta_\mathcal{A})^{\mathcal{X}}} \Bigg\{ r_n(x,a,\mu_n^k) + \sum_{x'} p_{n+1}(x'|x,a) \\
             &\sum_{a'} \pi_{n+ 1}(a'|x') \bigg[  - \frac{1}{\tau_k}\log\left( \frac{\pi_{n+1}(a'|x')}{\pi_{n+1}^k(a'|x')}\right)  
             + \Tilde{Q}^k_{n+1}(x',a') \bigg] \Bigg\}, \quad \forall 1 \leq n \leq N.
        \end{aligned}
    \end{cases}
\end{equation}
\end{theorem}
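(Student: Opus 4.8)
The plan is to use Proposition~\ref{opt_mu_equal_pi} to recast the constrained problem \eqref{MD_opt_problem} over $\mathcal{M}_{\mu_0}$ as an unconstrained optimization over policy sequences $\pi$, and then to recognize the resulting objective as a finite-horizon, KL-regularized single-agent MDP that is solvable by backward dynamic programming. First I would rewrite the objective: using $r_n(x,a,\mu_n^k) = -\nabla f_n(\mu_n^k)(x,a)$ and the definition \eqref{gamma} of $\Gamma$, and expanding each expectation under $\mu_n^\pi$ as a finite sum, minimizing the objective of \eqref{MD_opt_problem} is equivalent to maximizing
\[
J(\pi) := \sum_{n=1}^N \sum_{x,a}\mu_n^\pi(x,a)\left[ r_n(x,a,\mu_n^k) - \frac{1}{\tau_k}\log\frac{\pi_n(a|x)}{\pi_n^k(a|x)} \right].
\]
Since $\mu_n^\pi$ is, by Definition~\ref{mu_induced_by_pi}, the law of the state-action pair of a single representative agent started from $\mu_0$ and following $\pi$, this is exactly the expected cumulative regularized reward of a standard finite-horizon MDP. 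The decisive point I would stress is that the mirror-descent linearization has replaced the mean-field cost by a reward $r_n(\cdot,\cdot,\mu_n^k)$ that is \emph{frozen} at the previous iterate $\mu^k$ and therefore does not depend on $\mu^\pi$; this decoupling is precisely what restores the additive, policy-independent reward structure required for dynamic programming.

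Next I would apply Bellman's principle of optimality by backward induction on $n$. Defining the regularized value function $\tilde V_n^k(x)$ as the optimal reward-to-go from state $x$ at time $n$ and the $Q$-function $\tilde Q_n^k(x,a)$ as in \eqref{Bellman_Q_tilde}, the key identity is $\tilde Q_n^k(x,a) = r_n(x,a,\mu_n^k) + \sum_{x'}p_{n+1}(x'|x,a)\,\tilde V_{n+1}^k(x')$, where the maximization over $\pi_{n+1}$ decouples across states $x'$ because the policy coordinates $\pi_{n+1}(\cdot|x')$ are independent and the state-visitation weights $\rho_n^\pi(x)=\sum_a \mu_n^\pi(x,a)$ are nonnegative. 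At each state the per-step problem reduces to
\[
\tilde V_n^k(x) = \max_{q\in\Delta_\mathcal{A}}\sum_a q(a)\left[ \tilde Q_n^k(x,a) - \frac{1}{\tau_k}\log\frac{q(a)}{\pi_n^k(a|x)}\right],
\]
a strictly concave entropy-regularized problem on the simplex. Solving it through its Lagrangian (the Gibbs variational principle) yields the unique maximizer given by the softmax \eqref{policy_update} together with the optimal value $\tilde V_n^k(x) = \tfrac{1}{\tau_k}\log\sum_a \pi_n^k(a|x)\exp(\tau_k\tilde Q_n^k(x,a))$, which closes the recursion.

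Finally I would assemble the pieces. The induced measure $\mu^{\pi^{k+1}}$ is automatically feasible, i.e. lies in $\mathcal{M}_{\mu_0}$, by Proposition~\ref{opt_mu_equal_pi}; moreover it lies in $\mathcal{M}_{\mu_0}^*$, since the exponential factors are strictly positive and $\pi^k$ has full support, so full support is preserved and $\Gamma$ remains well-defined at the next iteration (an induction that also justifies that every iterate stays in the admissible set). The strict concavity of each per-state problem, combined with the nonnegativity of the visitation measure, guarantees both that $\pi^{k+1}$ is optimal and that the resulting $\mu^{k+1}$ is the unique minimizer of \eqref{MD_opt_problem}.

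The main obstacle is not the per-step softmax computation, which is routine, but the careful justification that the backward recursion genuinely solves the global trajectory problem: one must verify that maximizing the full-horizon objective $J(\pi)$ decomposes state-by-state through $\tilde Q_n^k$ even though an agent's choice of $\pi_n$ also reshapes the downstream visitation $\mu_m^\pi$ for $m>n$. This coupling is handled exactly because $\tilde Q_n^k(x,a)$ encodes the \emph{optimal} future return from $(x,a)$ and because the rewards are frozen at $\mu^k$, so the reward-to-go conditioned on reaching $(x,a)$ at time $n$ is independent of the policy used before time $n$; this is the precise mechanism that legitimizes the dynamic-programming decomposition here, whereas it would fail for the original mean-field objective whose costs depend on $\mu^\pi$ itself.
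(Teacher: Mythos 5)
Your proposal is correct and follows essentially the same route as the paper: reformulating Problem~\eqref{MD_opt_problem} over policy sequences via Proposition~\ref{opt_mu_equal_pi}, observing that freezing the rewards at $\mu^k$ yields a KL-regularized finite-horizon MDP amenable to backward dynamic programming, and solving each per-state entropy-regularized problem on the simplex via its Lagrangian to obtain the softmax update~\eqref{policy_update}. The only differences are presentational — you split the recursion into a value function $\tilde V^k$ and $\tilde Q^k$ where the paper writes a single combined recursion, and you make explicit the state-wise decoupling and the preservation of full support (i.e., that iterates stay in $\mathcal{M}_{\mu_0}^*$), points the paper treats more briefly.
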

\begin{proof}
See Appendix~\ref{thm_1_proof}.
\end{proof}

Notice that the value $\smash{\pi_{n+1} \in (\Delta_\mathcal{A})^{\mathcal{X}}}$ maximizing the equation to find $\smash{\Tilde{Q}^k_{n}}$ in the Recursion~\eqref{Bellman_Q_tilde} is given by $\smash{\pi_{n+1}^{k+1}}$. We can then build the MD-MFC method in Algorithm~\ref{alg:MD}. Note that Algorithm~\ref{alg:MD} is well defined because the policy update in Equation~\eqref{policy_update} ensures that each iteration remains in $\mathcal{M}_{\mu_0}^*.$

\begin{algorithm}[ht]
\caption{MD-MFC}\label{alg:MD}
\begin{algorithmic}
\STATE {\bfseries Input:} number of iterations $K$, initial sequence of policies $\smash{\pi^0 \in (\Delta_\mathcal{A})^{\mathcal{X} \times N}}$ such that $\smash{\mu^0 := \mu^{\pi_0} \in \mathcal{M}_{\mu_0}^*}$, initial state-action  distribution $\mu_0$ (always fixed), sequence of non-negative learning rates $(\tau_k)_{k \leq K}$.
    \FOR{$k= 0,...,K-1$} 
    \STATE $\mu^k = \mu^{\pi^k}$ as in           
    Definition~\ref{mu_induced_by_pi}.
    \STATE $\tilde{Q}_N^{k}(x,a) = r_N(x,a,\mu_N^k)$ for all $(x,a) \in \mathcal{X} \times \mathcal{A}$.
    \FOR{$n = N,...,1$}
    \STATE $\forall (x,a) \in \mathcal{X} \times \mathcal{A}:$
    \STATE $\pi_n^{k+1}(a|x)=  \frac{\pi_{n}^{k}(a|x) \exp\left(\tau_k \tilde{Q}_{n}^{k}(x,a) \right)}{\sum_{a'}\pi_{n}^{k}(a'|x) \exp\left(\tau_k \tilde{Q}_{n}^{k}(x,a') \right)}$.
    \STATE $\tilde{Q}_{n-1}^{k}(x,a)$ using the recursion in Equation~\eqref{Bellman_Q_tilde}.
    \ENDFOR
    \ENDFOR
\STATE {\bfseries return} $\pi^K$
\end{algorithmic}
\end{algorithm}

\subsection{Convergence properties of the algorithm}\label{sec:algo_convergence}
We present a result on the convergence rate of Algorithm~\ref{alg:MD}. 

\begin{theorem}\label{thm:convergence_rate}
Let $\pi^*$ be a minimizer of Problem~\eqref{main_optimisation_problem}.
Applying $K$ iterations of Algorithm~\ref{alg:MD} to this problem, with, for each $1 \leq k \leq K$,
$$\tau_k := \frac{\sqrt{2 \Gamma(\mu^{\pi^*},\mu^0)} }{L} \frac{1}{\sqrt{K}},$$
gives the following convergence rate
$$ \min_{0 \leq s \leq K} F(\mu^{\pi^s}) - F(\mu^{\pi^*}) \leq L \frac{\sqrt{2 \Gamma(\mu^{\pi^*},\mu^0)} }{\sqrt{K}}.$$
\end{theorem}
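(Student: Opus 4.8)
The plan is to recognize the update~\eqref{MD_opt_problem} as one step of mirror descent on the convex set $\mathcal{M}_{\mu_0}$ with Bregman divergence $\Gamma$, and to run the classical convergence analysis for convex and $L$-Lipschitz objectives. Write $\mu^* := \mu^{\pi^*}$ and $\mu^k := \mu^{\pi^k}$. The problem is genuinely convex: $\mathcal{M}_{\mu_0}$ is convex, and each $f_n(\mu_n)=(\mu_n(\varphi)-\gamma_n)^2$ is the square of an affine functional of $\mu_n$, hence convex, so $F=\sum_n f_n$ is convex. By Proposition~\ref{opt_mu_equal_pi} and Theorem~\ref{MD_explicit_result}, the iterates of Algorithm~\ref{alg:MD} are precisely the minimizers defining~\eqref{MD_opt_problem} and stay in $\mathcal{M}_{\mu_0}^*\subset\mathcal{M}_{\mu_0}$, so this viewpoint is legitimate.

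The heart of the argument is the one-step mirror-descent inequality. Writing the first-order optimality condition of~\eqref{MD_opt_problem} at the minimizer $\mu^{k+1}$ and applying the three-point identity for the Bregman divergence $\Gamma$ yields, for every $\mu\in\mathcal{M}_{\mu_0}$,
\[
\langle \nabla F(\mu^k),\,\mu^{k+1}-\mu\rangle \;\leq\; \frac{1}{\tau_k}\big[\Gamma(\mu,\mu^k)-\Gamma(\mu,\mu^{k+1})-\Gamma(\mu^{k+1},\mu^k)\big].
\]
I would then invoke convexity, $F(\mu^k)-F(\mu^*)\leq \langle \nabla F(\mu^k),\mu^k-\mu^*\rangle$, and split the right-hand side as $\langle \nabla F(\mu^k),\mu^k-\mu^{k+1}\rangle+\langle \nabla F(\mu^k),\mu^{k+1}-\mu^*\rangle$, bounding the second term by the inequality above with $\mu=\mu^*$. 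The first (\emph{drift}) term is controlled by Lipschitzness, $\langle \nabla F(\mu^k),\mu^k-\mu^{k+1}\rangle\leq L\|\mu^k-\mu^{k+1}\|_1$, and is absorbed against $-\tfrac1{\tau_k}\Gamma(\mu^{k+1},\mu^k)$ using that $\Gamma$ is $1$-strongly convex in its first argument for $\|\cdot\|_1$, so that $\Gamma(\mu^{k+1},\mu^k)\geq\tfrac12\|\mu^{k+1}-\mu^k\|_1^2$; the Fenchel--Young bound $Lt-\tfrac{1}{2\tau_k}t^2\leq\tfrac{L^2\tau_k}{2}$ finishes this step.

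Combining gives the per-step estimate $F(\mu^k)-F(\mu^*)\leq\tfrac{L^2\tau_k}{2}+\tfrac1{\tau_k}\big[\Gamma(\mu^*,\mu^k)-\Gamma(\mu^*,\mu^{k+1})\big]$. Taking the constant step $\tau_k\equiv\tau$ and summing over $k=0,\dots,K-1$ makes the Bregman terms telescope; discarding the nonnegative leftover $\tfrac1\tau\Gamma(\mu^*,\mu^K)\geq0$ leaves $\sum_{k=0}^{K-1}\!\big[F(\mu^k)-F(\mu^*)\big]\leq\tfrac{L^2\tau K}{2}+\tfrac1\tau\Gamma(\mu^*,\mu^0)$. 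Lower-bounding the left side by $K\min_{0\leq s\leq K}\big[F(\mu^s)-F(\mu^*)\big]$ and minimizing $\tfrac{L^2\tau}{2}+\tfrac{\Gamma(\mu^*,\mu^0)}{\tau K}$ over $\tau$ gives exactly $\tau=\sqrt{2\Gamma(\mu^*,\mu^0)}/(L\sqrt K)$ and the stated rate $L\sqrt{2\Gamma(\mu^*,\mu^0)}/\sqrt K$.

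The main obstacle is not the generic bookkeeping but the two structural facts about $\Gamma$ that the analysis takes as granted: that $\Gamma$ is a genuine Bregman divergence (so that both the three-point identity and nonnegativity hold, on the correct domain $\mathcal{M}_{\mu_0}\times\mathcal{M}_{\mu_0}^*$), and, more delicately, that it is strongly convex with respect to the very norm $\|\cdot\|_1$ in which $F$ is Lipschitz. Since~\eqref{gamma} expresses $\Gamma$ through the policies rather than directly through the distributions $\mu^\pi$, one must identify the convex potential on $\mathcal{M}_{\mu_0}$ generating it and relate its curvature to $\|\cdot\|_1$ on state-action distributions, presumably through a Pinsker-type inequality bounding the conditional Kullback--Leibler terms below by the squared $\ell_1$ distance. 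Pinning down this strong-convexity modulus, and checking that $L$ is the matching dual (Lipschitz) constant, is where the real work lies; once these are in hand, the rate follows from the steps above.
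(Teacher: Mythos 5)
Your proposal follows the same route as the paper: view Algorithm~\ref{alg:MD} as mirror descent on the convex set $\mathcal{M}_{\mu_0}$ with Bregman divergence $\Gamma$, and run the classical convex-and-Lipschitz analysis. The difference is in where the work is placed. You spell out the generic mirror-descent bookkeeping (first-order optimality plus three-point identity, Fenchel--Young absorption of the drift term, telescoping, step-size tuning), which the paper does not reproduce at all --- it simply invokes the known convergence proof of mirror descent. Conversely, the two facts you take as granted are precisely what the paper's proof consists of: Lemma~\ref{prop:D_decomp}, which shows that $\Gamma(\mu^\pi,\mu^{\pi'})$ defined through policies in \eqref{gamma} equals the Kullback--Leibler divergence $D(\mu_{1:N},\mu'_{1:N})$ between the \emph{joint laws of trajectories} (the transition kernels cancel by Markovianity), and equivalently $\sum_n D(\mu_n,\mu'_n)-\sum_n D(\rho_n,\rho'_n)$; and Proposition~\ref{penalization_is_bregman}, which deduces from this that $\Gamma$ is the Bregman divergence of $\psi(\mu)=\sum_n\phi(\mu_n)-\sum_n\phi(\rho_n)$ and is $1$-strongly convex, via Pinsker applied to the joint law followed by marginalization; plus the appendix computation of $L$. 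Your bookkeeping is correct (and the step-size optimization does reproduce the stated constant), so the comparison reduces to whether your deferred ``real work'' would go through as you sketch it.

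Here is the genuine gap: the strong convexity you pencil in is in the wrong norm, and your suggested route to it does not work as stated. First, the term-by-term ``Pinsker on the conditional KL'' idea fails because the conditional terms control $\sum_x\rho_n(x)\|\pi_n(\cdot|x)-\pi'_n(\cdot|x)\|_1^2$, whereas $\|\mu_n-\mu'_n\|_1$ also contains the discrepancy between the state marginals $\rho_n$ and $\rho'_n$, which is nonzero as soon as the policies differ at earlier times; the paper's fix is to apply Pinsker to the \emph{joint} law and then marginalize. Second, and more consequentially, that argument yields $1$-strong convexity only with respect to $\sup_{1\leq n\leq N}\|\cdot\|_1$ (this is exactly what Proposition~\ref{penalization_is_bregman} states), not with respect to the concatenated norm $\sum_n\|\mu_n-\mu'_n\|_1$ that your drift bound $\langle\nabla F(\mu^k),\mu^k-\mu^{k+1}\rangle\leq L\|\mu^k-\mu^{k+1}\|_1$ implicitly pairs with the constant $L=(\sum_n l_n^2)^{1/2}$ from Appendix~\ref{proof_conv_MD_MFC}. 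With respect to the concatenated norm the modulus degrades to order $1/N^2$: take dynamics where the state never moves and policies whose differences are identical across time steps; then $\Gamma$ grows like $N\|\mu_1-\mu'_1\|_1^2$ while $\bigl(\sum_n\|\mu_n-\mu'_n\|_1\bigr)^2$ grows like $N^2\|\mu_1-\mu'_1\|_1^2$. So your per-step inequality, read literally, does not deliver the theorem's constant; the analysis must be run entirely in the $\sup_n\|\cdot\|_1$ norm with $\nabla F$ measured in its dual norm $\sum_n\|\cdot\|_\infty$, which is exactly the ``matching dual Lipschitz constant'' verification you flagged and left open. That pairing is the substantive part of the proof (the paper itself is terse on it, but the building blocks it establishes --- the joint-law identity and the sup-norm strong convexity --- are the ones this pairing requires), and without it the proposal is a correct plan rather than a proof.
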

\begin{proof} 
The proof consists in showing that Algorithm~\ref{alg:MD} is a mirror descent scheme applied to
\begin{equation}\label{convex_main_optimisation_problem}
    \min_{\mu \in \mathcal{M}_{\mu_0}} F(\mu),
\end{equation}
that is equivalent to Problem~\eqref{main_optimisation_problem} as a direct consequence of Proposition~\eqref{opt_mu_equal_pi}, and that the new Problem~\eqref{convex_main_optimisation_problem} does satisfy the necessary hypothesis for mirror descent convergence \citep{MD} with a non-standard Bregman divergence. The strength of this result is showing that the complex non-convex Problem~\eqref{main_optimisation_problem} can be solved using a classical optimization algorithm, which, with the right choice of regularizer, has an efficient solution thanks to dynamic programming. 

Let us start by showing that $\Gamma$ is indeed a Bregman divergence. For ease of notation, for any probability measure $\eta \in \Delta_E$, whatever the (finite) space $E$, we introduce the neg-entropy function, with the convention that $0 \log(0) = 0$,
\begin{equation*}
    \phi (\eta):=\sum_{x\in E} \eta (x)\log \eta (x).
\end{equation*}
\begin{proposition}\label{penalization_is_bregman}
    Let $\mu, \mu' \in \mathcal{M}_{\mu_0}$ with marginals given by $\rho, \rho' \in (\Delta_\mathcal{X})^N$, induced by the policy sequences $\pi, \pi'$ respectively. The divergence $\Gamma$ is a Bregman divergence induced by the function
    \begin{equation*}
     \psi(\mu) := \sum_{n=1}^N \phi(\mu_n) - \sum_{n=1}^N \phi(\rho_n).
    \end{equation*}
    Also, $\Gamma$ is $1$-strongly convex with respect to the $\sup_{1 \leq n \leq N} \|\cdot\|_1$ norm.
\end{proposition}
The proof is in Appendix~\ref{penalization_is_bregman_proof} and consists in showing and exploring that the $\Gamma$ divergence taking values on the marginal state-action distributions is in fact the KL divergence on the joint distribution. 

Next, if $f_n$ is convex and $l_n$ Lipschitz with respect to the norm $\|\cdot\|_1$ for any $1 \leq n \leq N$, then $F$ is also convex and Lipschitz with constant $\smash{L := (\sum_{n=1}^N l_n^2)^{\nicefrac{1}{2}}}$ (see Appendix~\ref{proof_conv_MD_MFC}). The proof that our cost functions for the DSM model satisfy these assumptions is given in Appendix~\ref{proof_conv_heater}. Since the set $\mathcal{M}_{\mu_0}$ is convex, we also satisfy the convexity assumptions for the convergence of the mirror descent. The rate of convergence is thus a direct consequence of the application of the proof of convergence of mirror descent for Problem~\eqref{convex_main_optimisation_problem}.
\end{proof}

\subsection{Potential games}\label{potential_games}
% Make it brief
In Appendix~\ref{potential_games_discussion} we provide an equivalence between the MFC problem considered and a MFG by considering a game whose reward is given by $r_n(x_n, a_n, \mu_n) := - \nabla f_n(\mu_n)(x_n, a_n)$ for all $(x_n,a_n, \mu_n) \in \mathcal{X} \times \mathcal{A} \times \Delta_{\mathcal{X} \times \mathcal{A}}$. We call this type of game a potential game. The work done in \citet{concave_utility} and \citet{Pfeiffer_potential_mfg} relates the optimality conditions of optimization problems to the concept of Nash equilibrium in game problems. We do not go into details here, and leave more in-depth discussions to the Appendix section. The main purpose of this section is the realization that we can apply MFG algorithms to the DSM problem, which is a major breakthrough in this area because it opens up a new range of algorithms to this type of management system problems.

\section{Experiments}\label{experiments}

\subsection{Simulating the nominal dynamics}

To simulate the nominal dynamics, we use the nominal model presented in Appendix~\ref{nominal_behavior} and data from the SMACH (\textit{Simulation Multi-Agents des Comportements Humains}) platform \citep{smach} to approximate the probability of having a water withdrawal for each time step. In addition, we take a time frequency $\delta_t = 10$ minutes, and a temperature deadband with $T_\text{min} = 50^\circ C$ and $T_\text{max}=65^\circ C$. For more details on how the simulations are performed, see Appendix~\ref{water_heater_sim}. Figure~\ref{fig:sim_1000_oneweek} shows the simulation of the average drain and power consumption of $10^4$ water heaters following the nominal dynamics over the period of one week day respectively. The states (operating state and temperature) are randomly initialized for each water heater. 
% During the hours of the day with a peak of hot water withdrawal we also have a peak on the energy consumption, as all water heaters are turned on to compensate for the temperature loss.

\begin{figure}[ht]
\centering
\begin{subfigure}{.45\textwidth}
\centering
  % This file was created by tikzplotlib v0.9.8.
\begin{tikzpicture}[scale=0.50]

\definecolor{darkgray176}{RGB}{176,176,176}
\definecolor{steelblue31119180}{RGB}{31,119,180}

\begin{axis}[
tick align=outside,
tick pos=left,
x grid style={darkgray176},
xmin=-1.19166666666667, xmax=24.5,
xlabel=Time (hours),
ylabel=Average drain ($J$),
xtick style={color=black},
y grid style={darkgray176},
ymin=-52721.7369893467, ymax=1107156.47677628,
ytick style={color=black}
]
\addplot [thick, steelblue31119180]
table {%
0 37215.70446825
0.166666666666667 25203.8334724138
0.333333333333333 22187.08998
0.5 10075.8323655
0.666666666666667 8796.1480875
0.833333333333333 7403.27701692857
1 11845.3851225
1.16666666666667 8747.9433324
1.33333333333333 11204.0121942
1.5 9522.5993874
1.66666666666667 6999.130675125
1.83333333333333 6605.680480875
2 13018.564917
2.16666666666667 11760.77907
2.33333333333333 856.327167
2.5 9230.16110528572
2.66666666666667 3124.054895625
2.83333333333333 11062.5035023125
3 11212.49112975
3.16666666666667 7371.98320114286
3.33333333333333 15767.9350600909
3.5 7282.590525
3.66666666666667 8977.302473625
3.83333333333333 28594.670327413
4 39006.9128007
4.16666666666667 29870.1486969545
4.33333333333333 51932.6603489118
4.5 32990.5566677647
4.66666666666667 38648.624265
4.83333333333333 128673.689131454
5 106377.410801106
5.16666666666667 94739.5444814685
5.33333333333333 161395.363136636
5.5 153175.39425
5.66666666666667 156795.634557341
5.83333333333333 332765.323655412
6 481849.711002988
6.16666666666667 434018.685308357
6.33333333333333 578042.188139064
6.5 753640.772606051
6.66666666666667 751263.439137814
6.83333333333333 914229.777648678
7 1037673.74831813
7.16666666666667 778115.596129253
7.33333333333333 691110.100749275
7.5 663377.027225308
7.66666666666667 550746.936361392
7.83333333333333 505095.127105537
8 499600.031511719
8.16666666666667 437300.686131364
8.33333333333333 288666.915448942
8.5 354728.075169865
8.66666666666667 329085.629899291
8.83333333333333 279357.151771295
9 350281.252102019
9.16666666666667 241434.06055425
9.33333333333333 174655.574886724
9.5 146320.134783231
9.66666666666667 155352.046490883
9.83333333333333 128158.187819206
10 124416.997431995
10.1666666666667 124191.178889063
10.3333333333333 109145.258328432
10.5 85500.7301171369
10.6666666666667 76646.7208669602
10.8333333333333 57042.3221585085
11 77947.0678773621
11.1666666666667 80179.2290940882
11.3333333333333 84412.1132438985
11.5 65471.5267064384
11.6666666666667 56760.9625265502
11.8333333333333 48956.1337559051
12 54312.2451862942
12.1666666666667 70494.4907111159
12.3333333333333 83751.3110183101
12.5 91013.3467686486
12.6666666666667 94174.5429610368
12.8333333333333 83321.4475610742
13 88889.112751383
13.1666666666667 98238.711699169
13.3333333333333 83527.2319956135
13.5 97802.2241744013
13.6666666666667 91500.6274020001
13.8333333333333 52433.4644476119
14 64585.9100367303
14.1666666666667 54937.7615987331
14.3333333333333 30664.4636682478
14.5 38996.68374
14.6666666666667 32525.38665
14.8333333333333 25385.1416752263
15 38700.9332031
15.1666666666667 31173.6264500454
15.3333333333333 35645.5330177215
15.5 29837.0668041325
15.6666666666667 36829.0110505109
15.8333333333333 30054.3611355
16 57851.5965897053
16.1666666666667 34294.1028409059
16.3333333333333 25376.6867910577
16.5 49224.826567101
16.6666666666667 53358.3895092245
16.8333333333333 35587.0021497187
17 47443.4728369286
17.1666666666667 78995.1127808733
17.3333333333333 62588.5408836736
17.5 63708.7292306832
17.6666666666667 89226.9605962374
17.8333333333333 86419.9385142545
18 113186.442548937
18.1666666666667 101683.0003125
18.3333333333333 95431.208920007
18.5 114016.638358033
18.6666666666667 130290.27892331
18.8333333333333 165339.053728837
19 175651.017208127
19.1666666666667 154718.320171408
19.3333333333333 167278.553226655
19.5 210141.355700364
19.6666666666667 220355.489706988
19.8333333333333 213649.363969251
20 226134.001510877
20.1666666666667 166601.026005282
20.3333333333333 161232.351357221
20.5 181515.26970254
20.6666666666667 166268.690543914
20.8333333333333 166804.384733292
21 106067.031947015
21.1666666666667 105992.477508119
21.3333333333333 108616.51802458
21.5 113058.570095595
21.6666666666667 86359.4778766154
21.8333333333333 108206.272443245
22 103765.147922332
22.1666666666667 120899.516889323
22.3333333333333 148533.642623547
22.5 77762.0153816321
22.6666666666667 79298.0312188235
22.8333333333333 117863.976611325
23 76531.4192104554
23.1666666666667 46069.019865
23.3333333333333 24481.7383667838
23.5 7531.2237186
23.6666666666667 26939.1608784574
23.8333333333333 32984.2612215
};
\end{axis}

\end{tikzpicture}
  \caption{Average drain in Joules}
  \label{fig:drain}
\end{subfigure}%
\begin{subfigure}{.45\textwidth}
  \centering
  % This file was created by tikzplotlib v0.9.8.
\begin{tikzpicture}[scale=0.50]

\definecolor{darkgray176}{RGB}{176,176,176}

\begin{axis}[
tick align=outside,
tick pos=left,
x grid style={darkgray176},
xmin=-1.19166666666667, xmax=24.5,
ylabel=Average cons. ($\%$ of max. cons.),
xlabel=Time (hours),
xtick style={color=black},
ylabel= Average cons. ($\%$ of max. cons.),
y grid style={darkgray176},
ymin=-3.515, ymax=83.815,
ytick style={color=black}
]
\addplot [thick, orange]
table {%
0 15.4
0.166666666666667 13.8
0.333333333333333 12.1
0.5 11.1
0.666666666666667 10.6
0.833333333333333 9.6
1 8.1
1.16666666666667 7.5
1.33333333333333 6.9
1.5 6.6
1.66666666666667 5.6
1.83333333333333 4.9
2 4.9
2.16666666666667 4.7
2.33333333333333 4.5
2.5 4.1
2.66666666666667 4.2
2.83333333333333 4.2
3 3.9
3.16666666666667 3.6
3.33333333333333 3.7
3.5 3.9
3.66666666666667 3.8
3.83333333333333 4
4 3.9
4.16666666666667 3.9
4.33333333333333 3.9
4.5 4.1
4.66666666666667 4.2
4.83333333333333 4.3
5 5.7
5.16666666666667 6.4
5.33333333333333 6.7
5.5 8.6
5.66666666666667 9.3
5.83333333333333 10.3
6 12.2
6.16666666666667 15.1
6.33333333333333 19.3
6.5 23.6
6.66666666666667 29.5
6.83333333333333 35.8
7 44.1
7.16666666666667 51.2
7.33333333333333 57.7
7.5 61.9
7.66666666666667 65.9
7.83333333333333 69.3
8 71.3
8.16666666666667 74.1
8.33333333333333 75.1
8.5 74.7
8.66666666666667 74.5
8.83333333333333 74.2
9 74.9
9.16666666666667 72.4
9.33333333333333 71
9.5 67.5
9.66666666666667 64.3
9.83333333333333 58.4
10 53.3
10.1666666666667 48.2
10.3333333333333 41.5
10.5 36
10.6666666666667 31
10.8333333333333 26.2
11 21.5
11.1666666666667 17.9
11.3333333333333 13.9
11.5 11.3
11.6666666666667 10.3
11.8333333333333 8.9
12 7.5
12.1666666666667 6.1
12.3333333333333 5.5
12.5 5.3
12.6666666666667 5.2
12.8333333333333 5.1
13 5.4
13.1666666666667 5.2
13.3333333333333 5.3
13.5 4.6
13.6666666666667 4.2
13.8333333333333 3.9
14 3.9
14.1666666666667 4
14.3333333333333 3.3
14.5 3
14.6666666666667 2.8
14.8333333333333 2.5
15 2.9
15.1666666666667 2.5
15.3333333333333 2.6
15.5 2.7
15.6666666666667 2.6
15.8333333333333 2.6
16 2.5
16.1666666666667 2.7
16.3333333333333 3.2
16.5 3.1
16.6666666666667 3.7
16.8333333333333 3.8
17 4
17.1666666666667 4.1
17.3333333333333 4.8
17.5 5.5
17.6666666666667 5.8
17.8333333333333 6.5
18 7.9
18.1666666666667 8.8
18.3333333333333 10.1
18.5 11.1
18.6666666666667 11.1
18.8333333333333 12.3
19 15.8
19.1666666666667 17.5
19.3333333333333 19.7
19.5 21
19.6666666666667 24.9
19.8333333333333 28.1
20 30.7
20.1666666666667 34.8
20.3333333333333 37.3
20.5 39.9
20.6666666666667 42.4
20.8333333333333 44.7
21 46.1
21.1666666666667 46.4
21.3333333333333 46.5
21.5 45.9
21.6666666666667 45.8
21.8333333333333 42.2
22 40.2
22.1666666666667 37.6
22.3333333333333 34.7
22.5 31.4
22.6666666666667 29.6
22.8333333333333 26.5
23 24.7
23.1666666666667 22.4
23.3333333333333 20
23.5 17.8
23.6666666666667 17
23.8333333333333 15.8
};
\end{axis}

\end{tikzpicture}
  \caption{Average consumption}
  \label{fig:consump_1000}
\end{subfigure}
\caption{Average drain and power consumption for a simulation of $10^4$ water heaters over a period of one day.}
\label{fig:sim_1000_oneweek}
\end{figure}
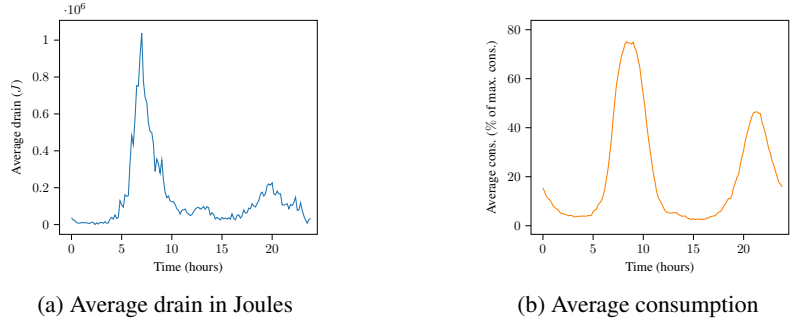

The target signal $\gamma = (\gamma_n)_{1 \leq n \leq N}$ is built as a sum of a baseline $b=(b_n)_{n\leq N}$ and a deviation signal $\lambda=(\lambda_n)_n$, $\gamma = \lambda + b(w)$, where $b(w)$ is the nominal dynamics obtained by simulating the water heaters (as in Figure~\ref{fig:sim_1000_oneweek}), and $w$ represents a random initialization of their states. If the deviation is zero, the average consumption is equal to the baseline. The deviation signal should have zero energy on the time considered for the simulations, i.e. $\smash{\sum_{n=0}^N \lambda_n = 0}$, in order to ensure a stationary process. We consider the two deviation signals illustrated in Figure~\ref{fig:deviations}.
% an one-hour deviation between $5$ and $6$ in the morning, as well as an eight-hour gap where we increase consumption during off-peak hours and decrease it during peak hours.

\begin{figure}[ht]
\centering
\begin{subfigure}{.45\textwidth}
\centering
  % This file was created with tikzplotlib v0.10.1.
\begin{tikzpicture}[scale=0.50]

\definecolor{darkgray176}{RGB}{176,176,176}
\definecolor{steelblue31119180}{RGB}{31,119,180}

\begin{axis}[
tick align=outside,
tick pos=left,
x grid style={darkgray176},
xmin=-1.19166666666667, xmax=24.5,
ylabel=Deviation ($\%$ of max. cons.),
xlabel=Time (hours),
xtick style={color=black},
y grid style={darkgray176},
ymin=-1.08333333333333, ymax=10.5277777777778,
ytick style={color=black}
]
\addplot [thick, black]
table {%
0 0
0.166666666666667 0
0.333333333333333 0
0.5 0
0.666666666666667 0
0.833333333333333 0
1 0
1.16666666666667 0
1.33333333333333 0
1.5 0
1.66666666666667 0
1.83333333333333 0
2 0
2.16666666666667 0
2.33333333333333 0
2.5 0
2.66666666666667 0
2.83333333333333 0
3 0
3.16666666666667 0
3.33333333333333 0
3.5 0
3.66666666666667 0
3.83333333333333 0
4 0
4.16666666666667 0
4.33333333333333 0
4.5 0
4.66666666666667 0
4.83333333333333 0
5 10
5.16666666666667 10
5.33333333333333 10
5.5 10
5.66666666666667 10
5.83333333333333 10
6 -0.555555555555556
6.16666666666667 -0.555555555555556
6.33333333333333 -0.555555555555556
6.5 -0.555555555555556
6.66666666666667 -0.555555555555556
6.83333333333333 -0.555555555555556
7 -0.555555555555556
7.16666666666667 -0.555555555555556
7.33333333333333 -0.555555555555556
7.5 -0.555555555555556
7.66666666666667 -0.555555555555556
7.83333333333333 -0.555555555555556
8 -0.555555555555556
8.16666666666667 -0.555555555555556
8.33333333333333 -0.555555555555556
8.5 -0.555555555555556
8.66666666666667 -0.555555555555556
8.83333333333333 -0.555555555555556
9 -0.555555555555556
9.16666666666667 -0.555555555555556
9.33333333333333 -0.555555555555556
9.5 -0.555555555555556
9.66666666666667 -0.555555555555556
9.83333333333333 -0.555555555555556
10 -0.555555555555556
10.1666666666667 -0.555555555555556
10.3333333333333 -0.555555555555556
10.5 -0.555555555555556
10.6666666666667 -0.555555555555556
10.8333333333333 -0.555555555555556
11 -0.555555555555556
11.1666666666667 -0.555555555555556
11.3333333333333 -0.555555555555556
11.5 -0.555555555555556
11.6666666666667 -0.555555555555556
11.8333333333333 -0.555555555555556
12 -0.555555555555556
12.1666666666667 -0.555555555555556
12.3333333333333 -0.555555555555556
12.5 -0.555555555555556
12.6666666666667 -0.555555555555556
12.8333333333333 -0.555555555555556
13 -0.555555555555556
13.1666666666667 -0.555555555555556
13.3333333333333 -0.555555555555556
13.5 -0.555555555555556
13.6666666666667 -0.555555555555556
13.8333333333333 -0.555555555555556
14 -0.555555555555556
14.1666666666667 -0.555555555555556
14.3333333333333 -0.555555555555556
14.5 -0.555555555555556
14.6666666666667 -0.555555555555556
14.8333333333333 -0.555555555555556
15 -0.555555555555556
15.1666666666667 -0.555555555555556
15.3333333333333 -0.555555555555556
15.5 -0.555555555555556
15.6666666666667 -0.555555555555556
15.8333333333333 -0.555555555555556
16 -0.555555555555556
16.1666666666667 -0.555555555555556
16.3333333333333 -0.555555555555556
16.5 -0.555555555555556
16.6666666666667 -0.555555555555556
16.8333333333333 -0.555555555555556
17 -0.555555555555556
17.1666666666667 -0.555555555555556
17.3333333333333 -0.555555555555556
17.5 -0.555555555555556
17.6666666666667 -0.555555555555556
17.8333333333333 -0.555555555555556
18 -0.555555555555556
18.1666666666667 -0.555555555555556
18.3333333333333 -0.555555555555556
18.5 -0.555555555555556
18.6666666666667 -0.555555555555556
18.8333333333333 -0.555555555555556
19 -0.555555555555556
19.1666666666667 -0.555555555555556
19.3333333333333 -0.555555555555556
19.5 -0.555555555555556
19.6666666666667 -0.555555555555556
19.8333333333333 -0.555555555555556
20 -0.555555555555556
20.1666666666667 -0.555555555555556
20.3333333333333 -0.555555555555556
20.5 -0.555555555555556
20.6666666666667 -0.555555555555556
20.8333333333333 -0.555555555555556
21 -0.555555555555556
21.1666666666667 -0.555555555555556
21.3333333333333 -0.555555555555556
21.5 -0.555555555555556
21.6666666666667 -0.555555555555556
21.8333333333333 -0.555555555555556
22 -0.555555555555556
22.1666666666667 -0.555555555555556
22.3333333333333 -0.555555555555556
22.5 -0.555555555555556
22.6666666666667 -0.555555555555556
22.8333333333333 -0.555555555555556
23 -0.555555555555556
23.1666666666667 -0.555555555555556
23.3333333333333 -0.555555555555556
23.5 -0.555555555555556
23.6666666666667 -0.555555555555556
23.8333333333333 -0.555555555555556
};
\end{axis}

\end{tikzpicture}
  \caption{One hour step.}
  \label{fig:one_hour_step}
\end{subfigure}%
\begin{subfigure}{.45\textwidth}
  \centering
  % This file was created by tikzplotlib v0.9.8.
\begin{tikzpicture}[scale=0.50]

\definecolor{darkgray176}{RGB}{176,176,176}
\definecolor{steelblue31119180}{RGB}{31,119,180}

\begin{axis}[
tick align=outside,
tick pos=left,
x grid style={darkgray176},
xmin=-1.19166666666667, xmax=24.5,
ylabel=Deviation ($\%$ of max. cons.),
xlabel=Time (hours),
xtick style={color=black},
y grid style={darkgray176},
ymin=-10.13636363636364, ymax=7.8636363636364,
ytick style={color=black}
]
\addplot [thick, black]
table {%
0 0
0.166666666666667 0
0.333333333333333 0
0.5 0
0.666666666666667 0
0.833333333333333 0
1 0
1.16666666666667 0
1.33333333333333 0
1.5 0
1.66666666666667 0
1.83333333333333 0
2 0
2.16666666666667 0
2.33333333333333 0
2.5 0
2.66666666666667 0
2.83333333333333 0
3 0
3.16666666666667 0
3.33333333333333 0
3.5 0
3.66666666666667 0
3.83333333333333 0
4 0
4.16666666666667 0
4.33333333333333 0
4.5 0
4.66666666666667 0
4.83333333333333 0
5 0
5.16666666666667 0
5.33333333333333 0
5.5 0
5.66666666666667 0
5.83333333333333 0
6 0
6.16666666666667 0
6.33333333333333 0
6.5 0
6.66666666666667 0
6.83333333333333 0
7 -8.88888888888889
7.16666666666667 -8.88888888888889
7.33333333333333 -8.88888888888889
7.5 -8.88888888888889
7.66666666666667 -8.88888888888889
7.83333333333333 -8.88888888888889
8 -8.88888888888889
8.16666666666667 -8.88888888888889
8.33333333333333 -8.88888888888889
8.5 -8.88888888888889
8.66666666666667 -8.88888888888889
8.83333333333333 -8.88888888888889
9 -8.88888888888889
9.16666666666667 -8.88888888888889
9.33333333333333 -8.88888888888889
9.5 -8.88888888888889
9.66666666666667 -8.88888888888889
9.83333333333333 -8.88888888888889
10 -8.88888888888889
10.1666666666667 -8.88888888888889
10.3333333333333 -8.88888888888889
10.5 -8.88888888888889
10.6666666666667 -8.88888888888889
10.8333333333333 -8.88888888888889
11 5
11.1666666666667 5
11.3333333333333 5
11.5 5
11.6666666666667 5
11.8333333333333 5
12 5
12.1666666666667 5
12.3333333333333 5
12.5 5
12.6666666666667 5
12.8333333333333 5
13 5
13.1666666666667 5
13.3333333333333 5
13.5 5
13.6666666666667 5
13.8333333333333 5
14 5
14.1666666666667 5
14.3333333333333 5
14.5 5
14.6666666666667 5
14.8333333333333 5
15 5
15.1666666666667 5
15.3333333333333 5
15.5 5
15.6666666666667 5
15.8333333333333 5
16 5
16.1666666666667 5
16.3333333333333 5
16.5 5
16.6666666666667 5
16.8333333333333 5
17 5
17.1666666666667 5
17.3333333333333 5
17.5 5
17.6666666666667 5
17.8333333333333 5
18 5
18.1666666666667 5
18.3333333333333 5
18.5 5
18.6666666666667 5
18.8333333333333 5
19 -8.88888888888889
19.1666666666667 -8.88888888888889
19.3333333333333 -8.88888888888889
19.5 -8.88888888888889
19.6666666666667 -8.88888888888889
19.8333333333333 -8.88888888888889
20 -8.88888888888889
20.1666666666667 -8.88888888888889
20.3333333333333 -8.88888888888889
20.5 -8.88888888888889
20.6666666666667 -8.88888888888889
20.8333333333333 -8.88888888888889
21 -8.88888888888889
21.1666666666667 -8.88888888888889
21.3333333333333 -8.88888888888889
21.5 -8.88888888888889
21.6666666666667 -8.88888888888889
21.8333333333333 -8.88888888888889
22 -8.88888888888889
22.1666666666667 -8.88888888888889
22.3333333333333 -8.88888888888889
22.5 -8.88888888888889
22.6666666666667 -8.88888888888889
22.8333333333333 -8.88888888888889
23 -8.88888888888889
23.1666666666667 -8.88888888888889
23.3333333333333 -8.88888888888889
23.5 -8.88888888888889
23.6666666666667 -8.88888888888889
23.8333333333333 -8.88888888888889
};
\end{axis}

\end{tikzpicture}
  \caption{Eight hours step.}
  \label{fig:eight_hours_step}
\end{subfigure}
\caption{Deviation signals $(\lambda_n)_{n\leq N}$}
\label{fig:deviations}
\end{figure}
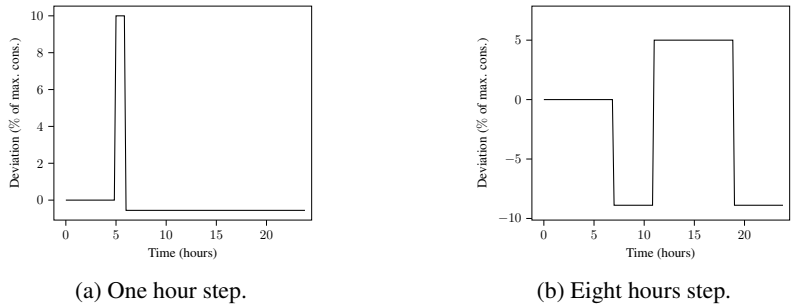

\subsection{Results}\label{results}

For a population of water heaters following the randomized dynamics we compare the optimal policy sequence obtained after $100$ iterations of MD-MFC, and two mean field game algorithms: Fictitious Play for MFG (FP-MFG) from \citet{FP_MFG_finitespace} and Online Mirror Descent for MFG (OMD-MFG) from \citet{OMD_perolat} (see Algorithms~\ref{alg:FP} and~\ref{alg:OMD_MFG} respectively in Appendix~\ref{algo_appen}). At each iteration, we compute a policy sequence of size $144$ (number of time steps). The heater's state space $\mathcal{X}$ is of size $2*41$ (two ON/OFF operating states times $41$ possible temperatures - integers from the ambient temperature $T_\text{amb} = 25$ to $T_\text{max} = 65$), and its action space $\mathcal{A}$ is of size $2$. We simulate each policy on $10^4$ water heaters and analyze the average consumption curve. The water heater's initial state distribution is equal to the initial distribution of the nominal consumption. The distribution of actions is initialized uniformly. The three algorithms have a memory complexity of order $N \times |\mathcal{X}| \times |\mathcal{A}|$, and a computational complexity of order $K \times N \times (|\mathcal{X}| \times |\mathcal{A}|)^2$.

In Figure~\ref{fig:policies_sim}, the consumption simulated by the best policies for all three algorithms appears to track the target better than the nominal consumption. This is not a surprise because all algorithms are supposed to converge to the same minima. However, they do so by finding different strategies and with different convergence rates. Figure~\ref{fig:obj_func} shows the logarithm of the objective function per iteration, and to visualize the policies obtained we plot in Figure~\ref{fig:policies}, at each time step  [$x$ axis], the probability of choosing the action $1$ (ON) [colors] for all possible temperatures between $T_\text{min} = 50$ and $T_\text{max} = 65$ [$y$ axis], when the current state is ON [up] or OFF [down]. The policies plots show that MD-MFC returns a more regular policy than FP-MFG.

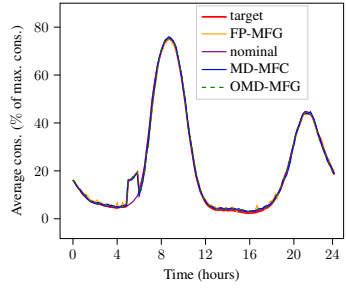
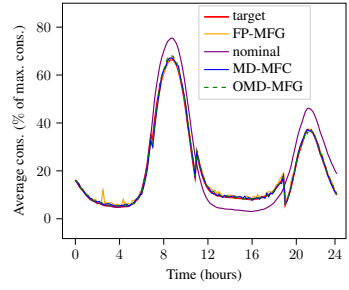
\begin{figure*}[t]
\centering
\begin{subfigure}{.5\textwidth}
\centering
  % This file was created by tikzplotlib v0.9.8.
\begin{tikzpicture}[scale=0.55]

\definecolor{color0}{rgb}{1,0.647058823529412,0}
\definecolor{color1}{rgb}{0.501960784313725,0,0.501960784313725}

\begin{axis}[
legend cell align={left},
legend style={fill opacity=0.8, draw opacity=1, text opacity=1, draw=white!80!black, at={(0.875,0.99)},
anchor=north east},
tick align=outside,
tick pos=left,
x grid style={white!69.0196078431373!black},
xticklabels={$0$,$4$,$8$,$12$,$16$,$20$,$24$},
xtick={0,24,48,72,96,120,141},
xlabel={Time (hours)},
xmin=-7, xmax=147,
xtick style={color=black},
yticklabels={0,$0$,$20$,$40$,$60$,$80$},
ylabel={Average cons. ($\%$ of max. cons.)},
y grid style={white!69.0196078431373!black},
ymin=-0.0826636363636364, ymax=0.899936363636364,
ytick style={color=black}
]
\addplot [very thick, red]
table {%
0 0.1619
1 0.1504
2 0.1389
3 0.1266
4 0.1176
5 0.1053
6 0.097
7 0.0896
8 0.0818
9 0.0755
10 0.0707
11 0.0679
12 0.0646
13 0.0633
14 0.0611
15 0.0602
16 0.0567
17 0.0544
18 0.053
19 0.0516
20 0.0494
21 0.0493
22 0.0472
23 0.0464
24 0.0453
25 0.0474
26 0.0481
27 0.0492
28 0.0497
29 0.0513
30 0.1572
31 0.1621
32 0.1669
33 0.1751
34 0.1859
35 0.1948
36 0.110544444444444
37 0.142444444444444
38 0.177244444444444
39 0.219644444444444
40 0.283644444444444
41 0.341544444444444
42 0.411744444444444
43 0.490344444444444
44 0.546744444444444
45 0.594744444444444
46 0.640544444444444
47 0.671144444444444
48 0.698444444444445
49 0.721544444444444
50 0.736844444444444
51 0.745644444444444
52 0.752444444444444
53 0.750944444444445
54 0.741144444444444
55 0.727344444444444
56 0.703944444444444
57 0.671044444444444
58 0.631444444444444
59 0.589744444444444
60 0.537844444444444
61 0.480544444444444
62 0.420344444444444
63 0.361044444444444
64 0.308744444444444
65 0.263944444444444
66 0.215144444444444
67 0.174044444444444
68 0.144944444444444
69 0.117144444444444
70 0.0981444444444444
71 0.0821444444444444
72 0.0680444444444444
73 0.0578444444444444
74 0.0499444444444444
75 0.0457444444444444
76 0.0412444444444444
77 0.0374444444444444
78 0.0363444444444445
79 0.0360444444444444
80 0.0365444444444444
81 0.0360444444444444
82 0.0343444444444444
83 0.0358444444444444
84 0.0337444444444444
85 0.0345444444444444
86 0.0349444444444444
87 0.0342444444444444
88 0.0327444444444444
89 0.0309444444444444
90 0.0294444444444444
91 0.0286444444444444
92 0.0266444444444444
93 0.0249444444444444
94 0.0238444444444444
95 0.0237444444444444
96 0.0237444444444444
97 0.0242444444444444
98 0.0245444444444444
99 0.0255444444444444
100 0.0279444444444444
101 0.0299444444444444
102 0.0322444444444444
103 0.0351444444444445
104 0.0403444444444444
105 0.0438444444444444
106 0.0479444444444444
107 0.0540444444444444
108 0.0619444444444444
109 0.0725444444444444
110 0.0834444444444444
111 0.0952444444444445
112 0.111044444444444
113 0.128044444444444
114 0.148544444444444
115 0.171744444444444
116 0.193144444444444
117 0.217244444444444
118 0.250644444444444
119 0.282744444444444
120 0.313444444444444
121 0.348244444444444
122 0.373444444444444
123 0.391244444444444
124 0.414344444444444
125 0.434044444444444
126 0.441944444444444
127 0.442144444444444
128 0.438244444444444
129 0.437244444444444
130 0.424844444444444
131 0.405944444444444
132 0.381144444444444
133 0.355244444444444
134 0.335744444444444
135 0.313644444444444
136 0.290344444444444
137 0.270144444444444
138 0.252644444444444
139 0.231644444444444
140 0.217244444444444
141 0.198744444444444
142 0.184244444444444
};
\addlegendentry{target}
\addplot [thick, color0]
table {%
0 0.1619
1 0.1563
2 0.1427
3 0.1282
4 0.1221
5 0.1142
6 0.1051
7 0.102
8 0.096
9 0.0813
10 0.0769
11 0.0761
12 0.0686
13 0.0783
14 0.0679
15 0.0697
16 0.0646
17 0.0566
18 0.0628
19 0.0598
20 0.0586
21 0.0574
22 0.0527
23 0.0417
24 0.0673
25 0.0457
26 0.0485
27 0.0674
28 0.0557
29 0.0665
30 0.163
31 0.1657
32 0.1711
33 0.1692
34 0.1866
35 0.1917
36 0.1153
37 0.1427
38 0.1724
39 0.1978
40 0.2819
41 0.3114
42 0.3873
43 0.4521
44 0.5179
45 0.5655
46 0.627
47 0.6768
48 0.6945
49 0.707
50 0.7436
51 0.7475
52 0.7456
53 0.751
54 0.7315
55 0.7284
56 0.6989
57 0.6775
58 0.626
59 0.6051
60 0.5499
61 0.4931
62 0.4176
63 0.3864
64 0.3167
65 0.2891
66 0.2304
67 0.1899
68 0.1556
69 0.1331
70 0.0983
71 0.0864
72 0.0822
73 0.0664
74 0.0562
75 0.0591
76 0.0516
77 0.0509
78 0.0395
79 0.0479
80 0.0467
81 0.045
82 0.0433
83 0.0427
84 0.0446
85 0.0414
86 0.0472
87 0.0483
88 0.0396
89 0.0295
90 0.037
91 0.0375
92 0.0393
93 0.0291
94 0.0329
95 0.0317
96 0.0299
97 0.0331
98 0.0289
99 0.0358
100 0.0548
101 0.0398
102 0.0429
103 0.0446
104 0.0422
105 0.057
106 0.0519
107 0.0733
108 0.0738
109 0.0826
110 0.1044
111 0.0869
112 0.1191
113 0.1367
114 0.1624
115 0.1846
116 0.1966
117 0.2269
118 0.2532
119 0.2762
120 0.3201
121 0.3409
122 0.3692
123 0.3922
124 0.4093
125 0.4355
126 0.4354
127 0.4338
128 0.4387
129 0.4408
130 0.4336
131 0.398
132 0.38
133 0.3629
134 0.3442
135 0.3338
136 0.3026
137 0.2824
138 0.257
139 0.2461
140 0.2262
141 0.2092
142 0.2012
};
\addlegendentry{FP-MFG}
\addplot [thick, color1]
table {%
0 0.1619
1 0.1504
2 0.1389
3 0.1266
4 0.1176
5 0.1053
6 0.097
7 0.0896
8 0.0818
9 0.0755
10 0.0707
11 0.0679
12 0.0646
13 0.0633
14 0.0611
15 0.0602
16 0.0567
17 0.0544
18 0.053
19 0.0516
20 0.0494
21 0.0493
22 0.0472
23 0.0464
24 0.0453
25 0.0474
26 0.0481
27 0.0492
28 0.0497
29 0.0513
30 0.0572
31 0.0621
32 0.0669
33 0.0751
34 0.0859
35 0.0948
36 0.1161
37 0.148
38 0.1828
39 0.2252
40 0.2892
41 0.3471
42 0.4173
43 0.4959
44 0.5523
45 0.6003
46 0.6461
47 0.6767
48 0.704
49 0.7271
50 0.7424
51 0.7512
52 0.758
53 0.7565
54 0.7467
55 0.7329
56 0.7095
57 0.6766
58 0.637
59 0.5953
60 0.5434
61 0.4861
62 0.4259
63 0.3666
64 0.3143
65 0.2695
66 0.2207
67 0.1796
68 0.1505
69 0.1227
70 0.1037
71 0.0877
72 0.0736
73 0.0634
74 0.0555
75 0.0513
76 0.0468
77 0.043
78 0.0419
79 0.0416
80 0.0421
81 0.0416
82 0.0399
83 0.0414
84 0.0393
85 0.0401
86 0.0405
87 0.0398
88 0.0383
89 0.0365
90 0.035
91 0.0342
92 0.0322
93 0.0305
94 0.0294
95 0.0293
96 0.0293
97 0.0298
98 0.0301
99 0.0311
100 0.0335
101 0.0355
102 0.0378
103 0.0407
104 0.0459
105 0.0494
106 0.0535
107 0.0596
108 0.0675
109 0.0781
110 0.089
111 0.1008
112 0.1166
113 0.1336
114 0.1541
115 0.1773
116 0.1987
117 0.2228
118 0.2562
119 0.2883
120 0.319
121 0.3538
122 0.379
123 0.3968
124 0.4199
125 0.4396
126 0.4475
127 0.4477
128 0.4438
129 0.4428
130 0.4304
131 0.4115
132 0.3867
133 0.3608
134 0.3413
135 0.3192
136 0.2959
137 0.2757
138 0.2582
139 0.2372
140 0.2228
141 0.2043
142 0.1898
};
\addlegendentry{nominal}
\addplot [thick, blue]
table {%
0 0.1619
1 0.1505
2 0.1375
3 0.1287
4 0.1223
5 0.1096
6 0.0985
7 0.0945
8 0.0827
9 0.0765
10 0.0698
11 0.0717
12 0.0632
13 0.0677
14 0.067
15 0.0633
16 0.0603
17 0.0597
18 0.0558
19 0.0609
20 0.0538
21 0.0561
22 0.0548
23 0.0504
24 0.0513
25 0.0489
26 0.053
27 0.0538
28 0.0568
29 0.0545
30 0.162
31 0.1623
32 0.1625
33 0.1715
34 0.1842
35 0.1882
36 0.0973
37 0.1254
38 0.1569
39 0.2035
40 0.2729
41 0.3184
42 0.4026
43 0.4766
44 0.5343
45 0.5826
46 0.637
47 0.6742
48 0.7024
49 0.7226
50 0.7459
51 0.7503
52 0.7593
53 0.7535
54 0.7472
55 0.7355
56 0.7185
57 0.6796
58 0.6394
59 0.5945
60 0.5363
61 0.4966
62 0.4247
63 0.3613
64 0.3233
65 0.2654
66 0.2197
67 0.1774
68 0.1529
69 0.1203
70 0.1031
71 0.086
72 0.0728
73 0.0621
74 0.0543
75 0.0515
76 0.0497
77 0.042
78 0.043
79 0.0421
80 0.0463
81 0.0451
82 0.0385
83 0.0449
84 0.0408
85 0.0415
86 0.0445
87 0.042
88 0.0424
89 0.0395
90 0.0424
91 0.0381
92 0.034
93 0.0352
94 0.0334
95 0.0291
96 0.0326
97 0.0313
98 0.0341
99 0.0324
100 0.0373
101 0.0429
102 0.0423
103 0.0431
104 0.0516
105 0.053
106 0.0564
107 0.0586
108 0.0695
109 0.0732
110 0.0925
111 0.1028
112 0.111
113 0.1256
114 0.1622
115 0.1678
116 0.1808
117 0.2072
118 0.2344
119 0.2798
120 0.3062
121 0.3504
122 0.368
123 0.3943
124 0.4205
125 0.4375
126 0.4392
127 0.4397
128 0.4375
129 0.447
130 0.4296
131 0.4149
132 0.388
133 0.3546
134 0.3376
135 0.3095
136 0.2879
137 0.2716
138 0.2446
139 0.2308
140 0.2219
141 0.2023
142 0.1872
};
\addlegendentry{MD-MFC}
\addplot [thick, green!50.1960784313725!black, dashed]
table {%
0 0.1619
1 0.1507
2 0.1474
3 0.1269
4 0.1208
5 0.1116
6 0.1007
7 0.0987
8 0.0811
9 0.0752
10 0.0744
11 0.0714
12 0.0675
13 0.061
14 0.0653
15 0.0614
16 0.0591
17 0.0563
18 0.0577
19 0.0512
20 0.0532
21 0.0512
22 0.0533
23 0.0512
24 0.0494
25 0.0493
26 0.0482
27 0.054
28 0.0501
29 0.0549
30 0.1593
31 0.1611
32 0.1689
33 0.1806
34 0.192
35 0.1909
36 0.1035
37 0.1192
38 0.1592
39 0.2009
40 0.2687
41 0.306
42 0.3864
43 0.4621
44 0.5243
45 0.5793
46 0.6285
47 0.6788
48 0.706
49 0.7226
50 0.7464
51 0.7498
52 0.7551
53 0.7459
54 0.7481
55 0.7366
56 0.7082
57 0.6754
58 0.6378
59 0.6064
60 0.5535
61 0.4949
62 0.4175
63 0.3772
64 0.3229
65 0.2672
66 0.2296
67 0.183
68 0.1582
69 0.1304
70 0.099
71 0.088
72 0.0751
73 0.0659
74 0.0535
75 0.0525
76 0.0503
77 0.0461
78 0.0423
79 0.0432
80 0.0439
81 0.0405
82 0.041
83 0.0443
84 0.0417
85 0.041
86 0.0399
87 0.0444
88 0.0402
89 0.0426
90 0.0368
91 0.0375
92 0.0366
93 0.0371
94 0.0358
95 0.0309
96 0.0314
97 0.0333
98 0.034
99 0.0356
100 0.038
101 0.0401
102 0.0428
103 0.0448
104 0.0445
105 0.0489
106 0.0506
107 0.0587
108 0.0646
109 0.0731
110 0.0852
111 0.101
112 0.121
113 0.1274
114 0.1529
115 0.1733
116 0.1818
117 0.233
118 0.2459
119 0.2753
120 0.3175
121 0.3561
122 0.3774
123 0.3914
124 0.4112
125 0.4383
126 0.4401
127 0.4479
128 0.4462
129 0.4378
130 0.4266
131 0.4057
132 0.3804
133 0.3566
134 0.3257
135 0.3111
136 0.2946
137 0.274
138 0.2539
139 0.2268
140 0.2243
141 0.1933
142 0.1855
};
\addlegendentry{OMD-MFG}
\end{axis}

\end{tikzpicture}
  \caption{Target with one hour step dev.}
  \label{fig:cons_one}
\end{subfigure}%
\begin{subfigure}{.5\textwidth}
  \centering
  % This file was created by tikzplotlib v0.9.8.
\begin{tikzpicture}[scale=0.55]

\definecolor{color0}{rgb}{1,0.647058823529412,0}
\definecolor{color1}{rgb}{0.501960784313725,0,0.501960784313725}

\begin{axis}[
legend cell align={left},
legend style={fill opacity=0.8, draw opacity=1, text opacity=1, draw=white!80!black, at={(0.875,0.99)},
anchor=north east},
tick align=outside,
tick pos=left,
x grid style={white!69.0196078431373!black},
xticklabels={$0$,$4$,$8$,$12$,$16$,$20$,$24$},
xtick={0,24,48,72,96,120,141},
xlabel={Time (hours)},
xmin=-7, xmax=147,
xtick style={color=black},
yticklabels={0,$0$,$20$,$40$,$60$,$80$},
ylabel={Average cons. ($\%$ of max. cons.)},
y grid style={white!69.0196078431373!black},
ymin=-0.0826636363636364, ymax=0.899936363636364,
ytick style={color=black}
]
\addplot [very thick, red]
table {%
0 0.1614
1 0.1511
2 0.1374
3 0.1263
4 0.118
5 0.1084
6 0.0977
7 0.09
8 0.0821
9 0.0773
10 0.0733
11 0.0703
12 0.0649
13 0.0647
14 0.0616
15 0.0577
16 0.0555
17 0.0547
18 0.0547
19 0.0532
20 0.0514
21 0.05
22 0.0485
23 0.0484
24 0.0487
25 0.0509
26 0.0507
27 0.0517
28 0.0518
29 0.0521
30 0.0578
31 0.0623
32 0.0672
33 0.0767
34 0.0831
35 0.0954
36 0.116
37 0.1481
38 0.1829
39 0.2231
40 0.2861
41 0.3478
42 0.332011111111111
43 0.409311111111111
44 0.465111111111111
45 0.512111111111111
46 0.555411111111111
47 0.586611111111111
48 0.610611111111111
49 0.632211111111111
50 0.650511111111111
51 0.658411111111111
52 0.665611111111111
53 0.664411111111111
54 0.653811111111111
55 0.641411111111111
56 0.616811111111111
57 0.585011111111111
58 0.548411111111111
59 0.504311111111111
60 0.452811111111111
61 0.395411111111111
62 0.341511111111111
63 0.282011111111111
64 0.228311111111111
65 0.177211111111111
66 0.2749
67 0.2362
68 0.206
69 0.179
70 0.1562
71 0.1391
72 0.1266
73 0.116
74 0.1101
75 0.1051
76 0.1006
77 0.0967
78 0.095
79 0.0942
80 0.0932
81 0.0913
82 0.0904
83 0.09
84 0.0894
85 0.0892
86 0.0879
87 0.086
88 0.0867
89 0.0853
90 0.0849
91 0.0837
92 0.0829
93 0.0823
94 0.0813
95 0.0806
96 0.08
97 0.0821
98 0.0829
99 0.0841
100 0.0861
101 0.088
102 0.0893
103 0.0919
104 0.0962
105 0.1004
106 0.105
107 0.1113
108 0.1182
109 0.1282
110 0.1383
111 0.1484
112 0.159
113 0.1761
114 0.0589111111111111
115 0.0807111111111111
116 0.101411111111111
117 0.126111111111111
118 0.163011111111111
119 0.196211111111111
120 0.230411111111111
121 0.264411111111111
122 0.289311111111111
123 0.312011111111111
124 0.337211111111111
125 0.359611111111111
126 0.371411111111111
127 0.372311111111111
128 0.366911111111111
129 0.359111111111111
130 0.342511111111111
131 0.322911111111111
132 0.299811111111111
133 0.272511111111111
134 0.252711111111111
135 0.230511111111111
136 0.209111111111111
137 0.183611111111111
138 0.165311111111111
139 0.146011111111111
140 0.130011111111111
141 0.112511111111111
142 0.0991111111111111
};
\addlegendentry{target}
\addplot [thick, color0]
table {%
0 0.1614
1 0.1559
2 0.1468
3 0.1292
4 0.1255
5 0.1105
6 0.0994
7 0.0942
8 0.0832
9 0.0823
10 0.08
11 0.0815
12 0.069
13 0.0658
14 0.0684
15 0.1182
16 0.0635
17 0.0673
18 0.0647
19 0.0679
20 0.0648
21 0.0591
22 0.06
23 0.0611
24 0.0547
25 0.0527
26 0.0809
27 0.0588
28 0.0621
29 0.0647
30 0.0566
31 0.0767
32 0.0767
33 0.0834
34 0.0784
35 0.1118
36 0.1086
37 0.1401
38 0.1861
39 0.2087
40 0.2801
41 0.3183
42 0.3009
43 0.3661
44 0.4403
45 0.4917
46 0.5404
47 0.5942
48 0.6094
49 0.6334
50 0.6407
51 0.6721
52 0.6645
53 0.6757
54 0.651
55 0.6457
56 0.6267
57 0.5867
58 0.5609
59 0.5171
60 0.4663
61 0.4136
62 0.3609
63 0.2882
64 0.2471
65 0.2027
66 0.2781
67 0.2499
68 0.2186
69 0.1935
70 0.1687
71 0.1407
72 0.1362
73 0.1307
74 0.1237
75 0.1174
76 0.1175
77 0.1032
78 0.1133
79 0.1012
80 0.0933
81 0.0935
82 0.0978
83 0.0975
84 0.0996
85 0.0936
86 0.0983
87 0.096
88 0.0882
89 0.0906
90 0.0945
91 0.086
92 0.0829
93 0.0947
94 0.0919
95 0.0831
96 0.0882
97 0.094
98 0.0854
99 0.1008
100 0.0901
101 0.0906
102 0.0952
103 0.0982
104 0.1078
105 0.1137
106 0.1119
107 0.1097
108 0.1212
109 0.1412
110 0.1441
111 0.1641
112 0.1774
113 0.1887
114 0.07
115 0.0884
116 0.1004
117 0.1504
118 0.1827
119 0.2026
120 0.2393
121 0.2812
122 0.2942
123 0.3253
124 0.3365
125 0.3647
126 0.3697
127 0.3721
128 0.3731
129 0.3631
130 0.3465
131 0.3234
132 0.3099
133 0.2795
134 0.2693
135 0.2306
136 0.2195
137 0.1902
138 0.1701
139 0.1601
140 0.139
141 0.1154
142 0.1122
};
\addlegendentry{FP-MFG}
\addplot [thick, color1]
table {%
0 0.1614
1 0.1511
2 0.1374
3 0.1263
4 0.118
5 0.1084
6 0.0977
7 0.09
8 0.0821
9 0.0773
10 0.0733
11 0.0703
12 0.0649
13 0.0647
14 0.0616
15 0.0577
16 0.0555
17 0.0547
18 0.0547
19 0.0532
20 0.0514
21 0.05
22 0.0485
23 0.0484
24 0.0487
25 0.0509
26 0.0507
27 0.0517
28 0.0518
29 0.0521
30 0.0578
31 0.0623
32 0.0672
33 0.0767
34 0.0831
35 0.0954
36 0.116
37 0.1481
38 0.1829
39 0.2231
40 0.2861
41 0.3478
42 0.4209
43 0.4982
44 0.554
45 0.601
46 0.6443
47 0.6755
48 0.6995
49 0.7211
50 0.7394
51 0.7473
52 0.7545
53 0.7533
54 0.7427
55 0.7303
56 0.7057
57 0.6739
58 0.6373
59 0.5932
60 0.5417
61 0.4843
62 0.4304
63 0.3709
64 0.3172
65 0.2661
66 0.2249
67 0.1862
68 0.156
69 0.129
70 0.1062
71 0.0891
72 0.0766
73 0.066
74 0.0601
75 0.0551
76 0.0506
77 0.0467
78 0.045
79 0.0442
80 0.0432
81 0.0413
82 0.0404
83 0.04
84 0.0394
85 0.0392
86 0.0379
87 0.036
88 0.0367
89 0.0353
90 0.0349
91 0.0337
92 0.0329
93 0.0323
94 0.0313
95 0.0306
96 0.03
97 0.0321
98 0.0329
99 0.0341
100 0.0361
101 0.038
102 0.0393
103 0.0419
104 0.0462
105 0.0504
106 0.055
107 0.0613
108 0.0682
109 0.0782
110 0.0883
111 0.0984
112 0.109
113 0.1261
114 0.1478
115 0.1696
116 0.1903
117 0.215
118 0.2519
119 0.2851
120 0.3193
121 0.3533
122 0.3782
123 0.4009
124 0.4261
125 0.4485
126 0.4603
127 0.4612
128 0.4558
129 0.448
130 0.4314
131 0.4118
132 0.3887
133 0.3614
134 0.3416
135 0.3194
136 0.298
137 0.2725
138 0.2542
139 0.2349
140 0.2189
141 0.2014
142 0.188
};
\addlegendentry{nominal}
\addplot [thick, blue]
table {%
0 0.1614
1 0.1509
2 0.145
3 0.1284
4 0.1215
5 0.1089
6 0.0987
7 0.0956
8 0.0859
9 0.0828
10 0.0723
11 0.0743
12 0.0694
13 0.0727
14 0.064
15 0.0617
16 0.0597
17 0.062
18 0.0624
19 0.0609
20 0.0578
21 0.0543
22 0.0538
23 0.0549
24 0.0538
25 0.0567
26 0.0553
27 0.061
28 0.0599
29 0.0543
30 0.059
31 0.0625
32 0.0694
33 0.08
34 0.0894
35 0.1002
36 0.1018
37 0.1306
38 0.1642
39 0.2017
40 0.2672
41 0.3231
42 0.3003
43 0.3784
44 0.442
45 0.4993
46 0.5439
47 0.5983
48 0.6221
49 0.6317
50 0.6588
51 0.6692
52 0.6716
53 0.6686
54 0.6578
55 0.6426
56 0.6376
57 0.5921
58 0.5571
59 0.5189
60 0.4704
61 0.3978
62 0.3484
63 0.2893
64 0.2405
65 0.1806
66 0.2767
67 0.2454
68 0.2092
69 0.1801
70 0.1637
71 0.1412
72 0.1319
73 0.1229
74 0.1073
75 0.1054
76 0.1039
77 0.0962
78 0.1015
79 0.0951
80 0.089
81 0.0978
82 0.0911
83 0.0914
84 0.0935
85 0.0947
86 0.0945
87 0.0907
88 0.0911
89 0.0928
90 0.0905
91 0.0848
92 0.0942
93 0.0836
94 0.0878
95 0.0868
96 0.0771
97 0.0846
98 0.0852
99 0.0887
100 0.0812
101 0.0922
102 0.0908
103 0.0858
104 0.1058
105 0.092
106 0.1074
107 0.1038
108 0.1137
109 0.1294
110 0.1389
111 0.1486
112 0.1468
113 0.1791
114 0.0636
115 0.0734
116 0.0983
117 0.1285
118 0.1658
119 0.1938
120 0.2373
121 0.2705
122 0.3018
123 0.3214
124 0.3396
125 0.3602
126 0.3731
127 0.3661
128 0.367
129 0.3605
130 0.3423
131 0.3244
132 0.3065
133 0.2726
134 0.2614
135 0.2338
136 0.2135
137 0.1842
138 0.1676
139 0.1454
140 0.1286
141 0.1123
142 0.1056
};
\addlegendentry{MD-MFC}
\addplot [thick, green!50.1960784313725!black, dashed]
table {%
0 0.1614
1 0.154
2 0.1403
3 0.1266
4 0.1197
5 0.1173
6 0.1003
7 0.0983
8 0.0851
9 0.0816
10 0.0741
11 0.0736
12 0.066
13 0.0645
14 0.0683
15 0.0642
16 0.0591
17 0.058
18 0.058
19 0.0553
20 0.0553
21 0.0567
22 0.0542
23 0.0532
24 0.0522
25 0.0528
26 0.0552
27 0.0587
28 0.0548
29 0.0584
30 0.0587
31 0.0627
32 0.063
33 0.077
34 0.0808
35 0.0986
36 0.1076
37 0.1419
38 0.1811
39 0.2112
40 0.2759
41 0.3291
42 0.314
43 0.3914
44 0.4569
45 0.5109
46 0.5772
47 0.5979
48 0.6228
49 0.6445
50 0.6699
51 0.6708
52 0.6794
53 0.6791
54 0.6653
55 0.6539
56 0.6231
57 0.5949
58 0.5586
59 0.5183
60 0.4677
61 0.4143
62 0.3436
63 0.3029
64 0.2412
65 0.179
66 0.2822
67 0.244
68 0.2147
69 0.189
70 0.1658
71 0.1426
72 0.1316
73 0.1225
74 0.1153
75 0.114
76 0.1069
77 0.1019
78 0.1051
79 0.0957
80 0.0954
81 0.0938
82 0.0973
83 0.0944
84 0.0881
85 0.0923
86 0.089
87 0.0899
88 0.0893
89 0.0838
90 0.0855
91 0.0825
92 0.0828
93 0.0908
94 0.083
95 0.0839
96 0.0822
97 0.0894
98 0.0899
99 0.0861
100 0.0876
101 0.0868
102 0.0877
103 0.0899
104 0.0983
105 0.099
106 0.1059
107 0.1086
108 0.1202
109 0.1322
110 0.1422
111 0.1453
112 0.1684
113 0.1737
114 0.0641
115 0.0785
116 0.0994
117 0.1268
118 0.1515
119 0.1966
120 0.2345
121 0.2722
122 0.2867
123 0.3124
124 0.3271
125 0.3652
126 0.3555
127 0.3711
128 0.3755
129 0.35
130 0.3508
131 0.3261
132 0.2931
133 0.2734
134 0.2521
135 0.2289
136 0.2079
137 0.1891
138 0.1742
139 0.1443
140 0.1393
141 0.1115
142 0.1024
};
\addlegendentry{OMD-MFG}
\end{axis}

\end{tikzpicture}
  \caption{Target with eight hours step dev.}
  \label{fig:cons_eight}
\end{subfigure}

\caption{Simulation of the power consumption of $10^4$ water heaters for the optimal policy computed through different algorithms, for targets constructed with the deviations of one hour [left] and eight hours [right]. We compare with the nominal consumption (without deviation).}
\label{fig:policies_sim}
\end{figure*}

\begin{figure*}[t]
\centering
\begin{subfigure}{.51\textwidth}
\centering
  \includegraphics[scale=0.23]{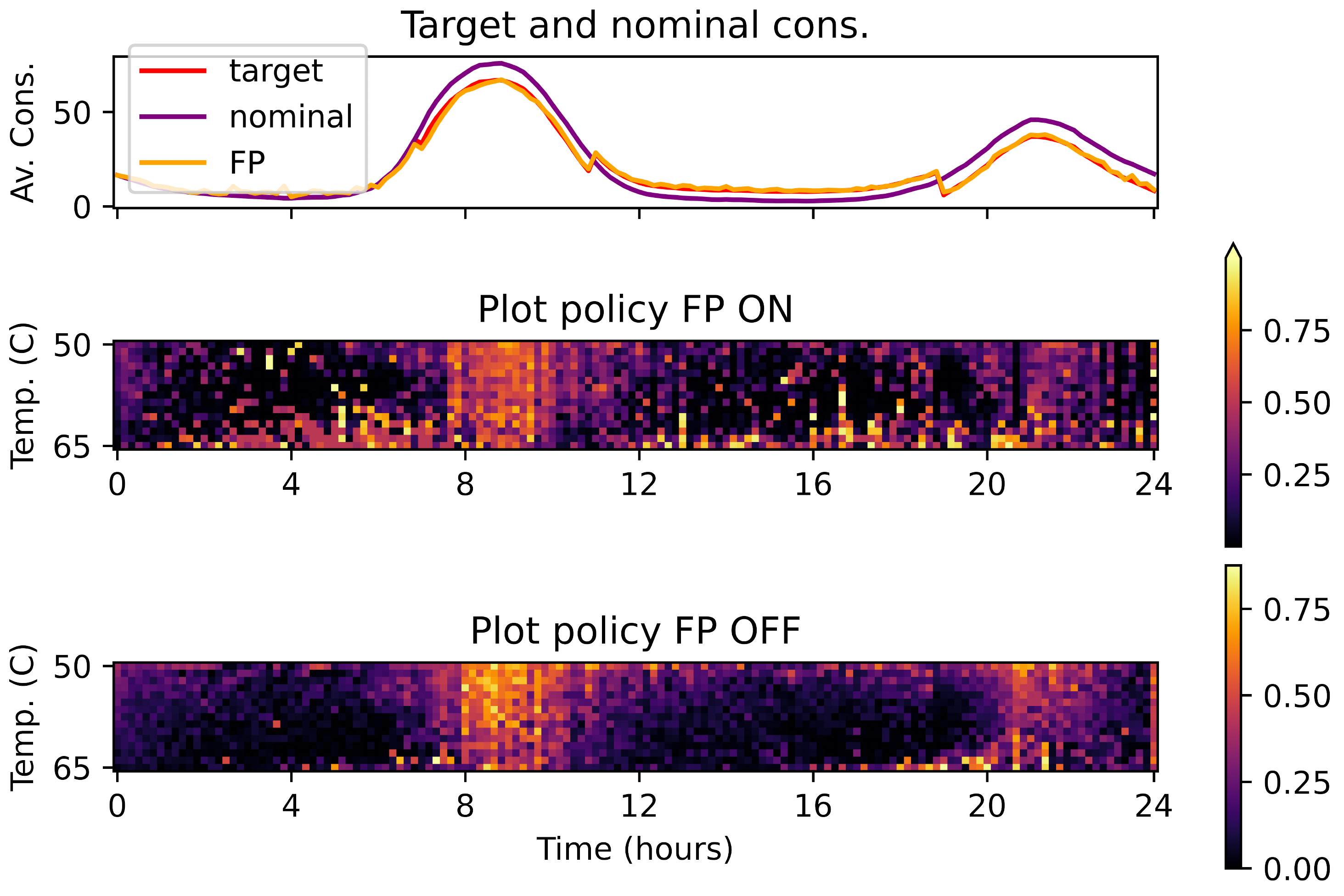}
  \caption{Policy FP-MFG.}
  \label{fig:FP_policy}
\end{subfigure}%
\begin{subfigure}{.51\textwidth}
  \centering
  \includegraphics[scale=0.23]{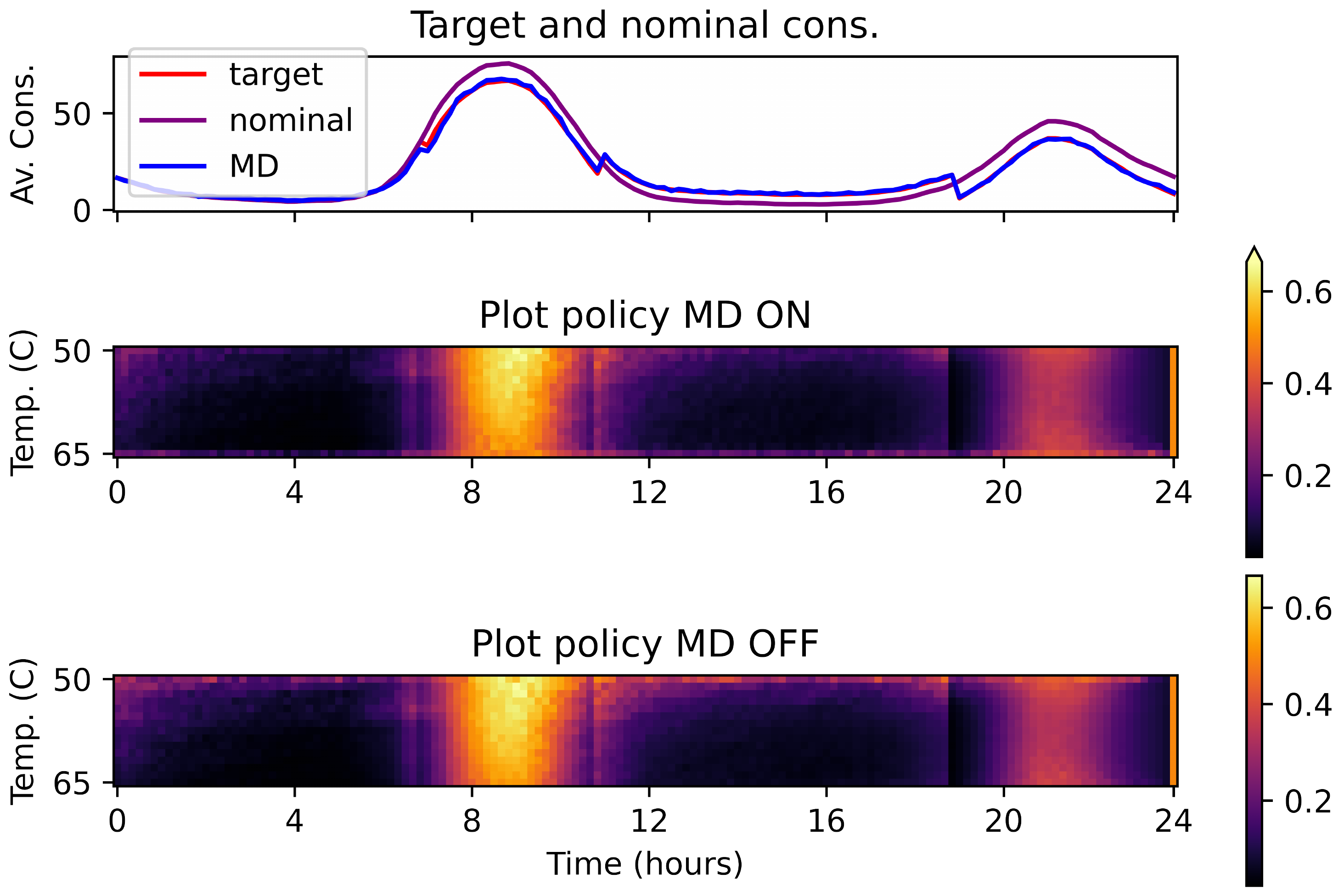}
  \caption{Policy MD-MFC.}
  \label{fig:MD_policy}
\end{subfigure}
\caption{[top] Target, average consumption obtained by the nominal policy and by the policy computed by FP-MFG (left) and MD-MFC (right). [middle] Probability of choosing the ON action when in the ON state. [bottom] Probability of choosing the ON action when in the OFF state. For all temperatures between $T_\text{min}=50$ and $T_\text{max} = 65$ [$y$ axis], over the course of a day with a time step of $10$ minutes [$x$ axis], for a target with a deviation step of eight hours.}
\label{fig:policies}
\end{figure*}

\begin{figure}
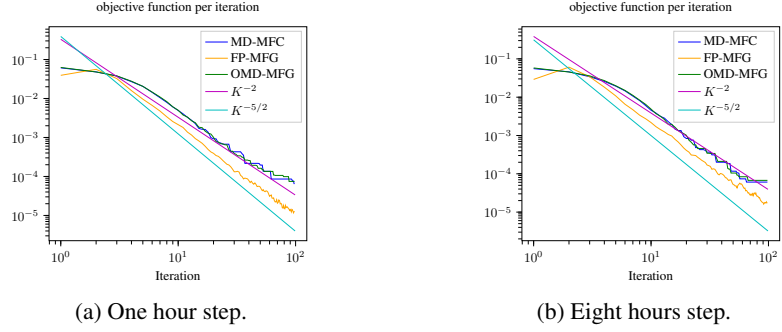

\centering
\begin{subfigure}{.45\textwidth}
\centering
  \input{graphs/logobjfunc_onehour_line}
  \caption{One hour step.}
  \label{fig:obj_func_one}
\end{subfigure}%
\begin{subfigure}{.45\textwidth}
  \centering
  \input{graphs/logobjfunc_eighthours_line}
  \caption{Eight hours step.}
  \label{fig:obj_func_eight}
\end{subfigure}
\caption{Log-log plot of the objective function per iteration for each method when using a target with an one hour step [left] and eight hours step [right] deviations.}
\label{fig:obj_func}
\end{figure}

\paragraph{Different initialisations impact the number of switches}
We noticed that different initialisations of MD-MFC lead to different policies. Given a state distribution sequence $\rho$, the policy generating this distribution is not necessarily unique. In particular, these policies, while providing the same $\rho$, may differ in terms of the average number of ON/OFF switches induced over the time horizon considered. In our model, no switching limit is assumed, but a large number of switches can be detrimental to the device. This non-uniqueness helps us reduce switch count without adding new constraints by finding multiple policies that achieve the right consumption and selecting the one with the fewest switches. This can also be useful for MFC problems in other areas, e.g. transaction costs in finance.

% For example, suppose that at some time step $n$, half of the water heaters are ON and the other half are OFF. Suppose that at the next time step $n+1$, the target indicates that we would like to still have half of the heaters to be ON and the other half to be OFF. A policy inducing this behavior could be constructed in several ways: we could have a probability $1$ of keeping the heaters in the same state, or a probability $1$ of turning ON the heaters that were OFF and turning OFF the heaters that were ON. Both policies result in the same proportion of heaters ON/OFF at the end. However, in the first case, no switching is done, while in the second case, all heaters must be switched.

In the case illustrated here the average number of daily switches is $33$, while the nominal dynamic averages only $3$ switches per day. By initializing the MD-MFC algorithm with a policy that is a $0.1$ deviation from the nominal policy as in Figure~\ref{fig:nominal_policy_1}, we find that the number of switches decreases to a daily average of $9.2$ while still following the target curve, see Figure~\ref{fig:MD_nominal_policy_1}. The same does not happen with FP-MFG, which makes it less interesting for the real-world scenarios we consider here.
% (holding ON when ON and OFF when OFF within the temperature deadband $[T_\text{min}, T_\text{max}]$) 

Finally, Table~\ref{comparing_algorithms} gives a global comparison between the three algorithms. FP-MFG converges faster but is not suitable for controlling switch count, being less interesting for the considered DSM problem, and needs a smooth objective function assumption. OMD-MFG is empirically as good as MD-MFC but lacks convergence proof for discrete cases.

\begin{figure*}[t]
\centering
\begin{subfigure}{.51\textwidth}
\centering
  \includegraphics[scale=0.23]{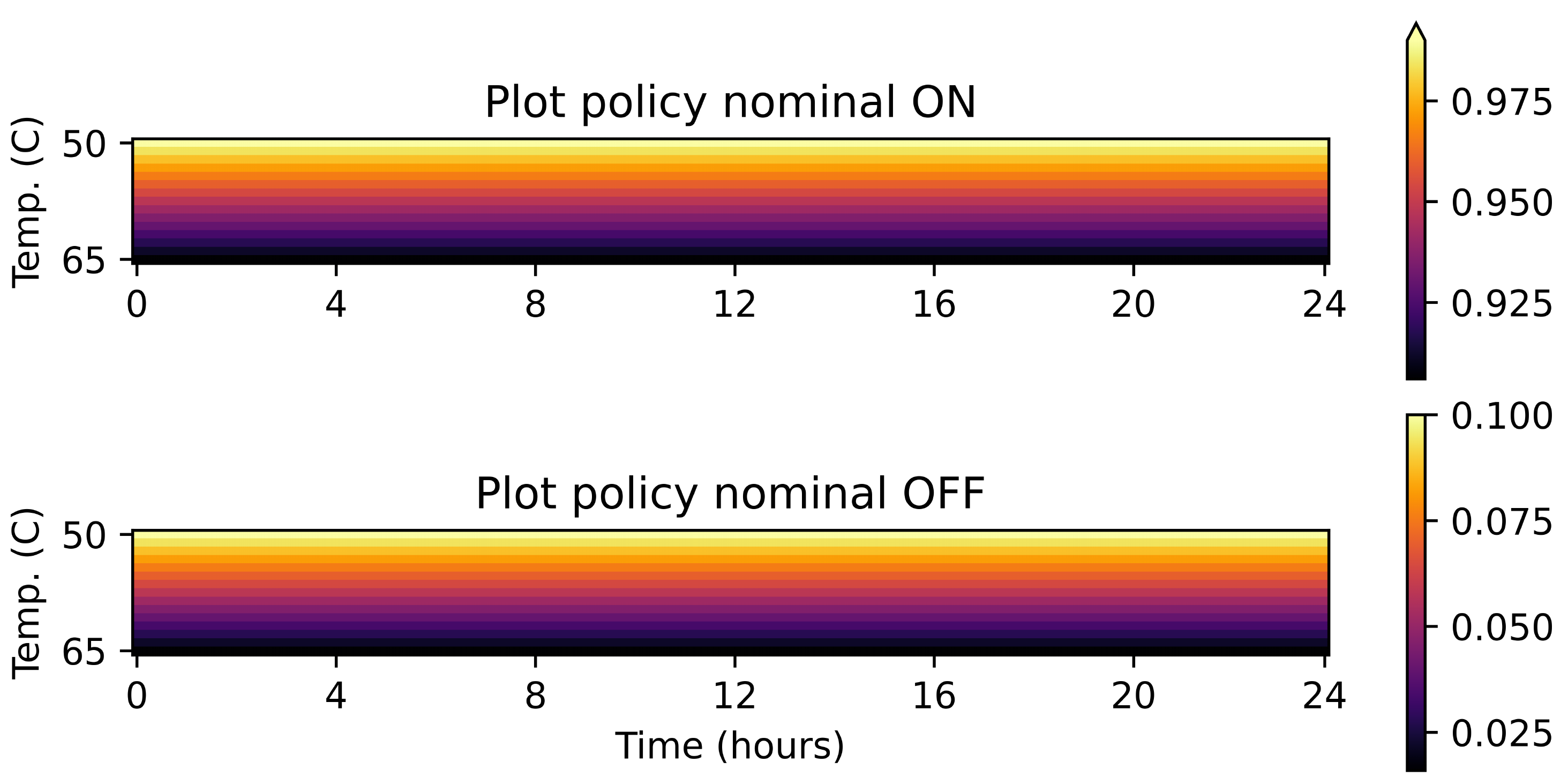}
  \caption{Nominal policy deviation.}
  \label{fig:nominal_policy_1}
\end{subfigure}%
\begin{subfigure}{.51\textwidth}
  \centering
  \includegraphics[scale=0.23]{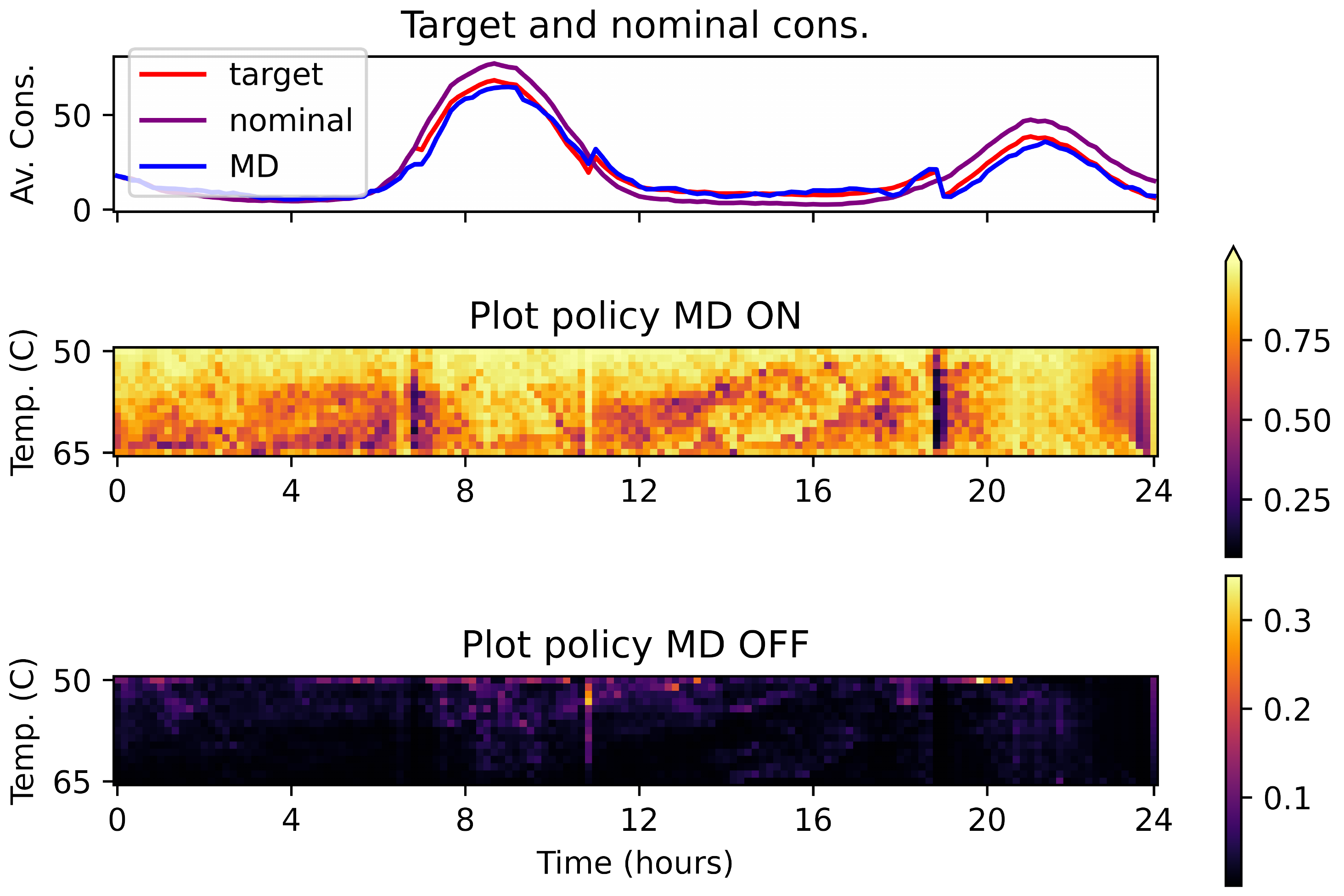}
  \caption{Policy MD-MFC with nominal policy deviation.}
  \label{fig:MD_nominal_policy_1}
\end{subfigure}
\caption{[left] Initial policy sequence $\pi^0$ with a deviation of $0.1$ from the nominal policy. [right] Output policy sequence of Algorithm~\ref{alg:MD} initialized with the policy at left.}
\label{fig:policies_no_switch}
\end{figure*}

\begin{table}
  \caption{Comparing MD-MFC, OMD-MFG and FP-MFG}
  \label{comparing_algorithms}
  \centering
  \begin{tabular}{llll}
    \toprule
    Algorithm   & \makecell{Convergence \\ rate} & \makecell{Flexibility on \\applications} & \makecell{Convergence \\ hypothesis} \\
    \midrule
    MD-MFC & \(K^{-1/2}\) & Yes (switches) & convex + Lispschitz     \\
    OMD-MFG & no proof & Yes (switches) &   convex + Lispschitz    \\
    FP-MFG & \(K^{-1}\) & No & convex + Lipschitz + smooth   \\
    \bottomrule
  \end{tabular}
\end{table}

% The slope of the best approximation lines give us an empirical measure of convergence: we obtain that FP seems to converge empirically with a rate of $\sim 1/K^{5/2}$, while MD and OMD seem to converge empirically with a rate of $\sim 1/K^{2}$. The empirical convergence rates are better than the theoretical limits, $1/K$ for FP and $1/\sqrt{K}$ for MD.

\section{Future work}\label{future_work}
Future work involves adapting existing algorithms to real-time algorithms, proposing schemes where each iteration corresponds to a time step. We further aim to generalize to a model-free scenario, learning user behavior on the fly while preserving privacy with partially observable states. Moreover, we believe we can extend our approach to the accelerated version of mirror descent \citep{accelerated_md} providing a better theoretical convergence rate of order $1/K^{2}$.

\bibliography{bib}

%%%%%%%%%%%%%%%%%%%%%%%%%%%%%%%%%%%%%%%%%%%%%%%%%%%%%%%%%%%%%%%%%%%%%%%%%%%%%%%
%%%%%%%%%%%%%%%%%%%%%%%%%%%%%%%%%%%%%%%%%%%%%%%%%%%%%%%%%%%%%%%%%%%%%%%%%%%%%%%
% APPENDIX
%%%%%%%%%%%%%%%%%%%%%%%%%%%%%%%%%%%%%%%%%%%%%%%%%%%%%%%%%%%%%%%%%%%%%%%%%%%%%%%
%%%%%%%%%%%%%%%%%%%%%%%%%%%%%%%%%%%%%%%%%%%%%%%%%%%%%%%%%%%%%%%%%%%%%%%%%%%%%%%
\newpage
\appendix
\onecolumn
\section{Missing proofs}\label{missing_proofs}

\subsection{Proof of Proposition \ref{opt_mu_equal_pi}}

\begin{proof}
Consider a fixed initial state-action distribution $\mu_0 \in \Delta_{\mathcal{X} \times \mathcal{A}}$. Let $\mu \in \mathcal{M}_{\mu_0}$ and define $\rho = (\rho_n)_{1 \leq n \leq N}$ such that for all $x \in \mathcal{X}$, $\rho_n(x) = \sum_{a} \mu_n(x,a)$  (the associated state distribution). First, let us deal with the case where $\rho_n(x) \neq 0$. Define a policy sequence $\pi \in (\Delta_\mathcal{A})^{\mathcal{X} \times N}$ such that $\smash{\pi_{n}(a|x) = \frac{\mu_{n}(x,a)}{\rho_{n}(x)}}$ for all $(x,a) \in \mathcal{X} \times \mathcal{A}$. We want to show that $\mu^\pi = \mu$ for this policy $\pi$. We reason by induction. For $n=0$, $\mu_0^\pi = \mu_0$ by definition. Suppose $\mu^\pi_n = \mu_n$, thus for $n+1$ and for all $(x',a') \in \mathcal{X} \times \mathcal{A}$ 
    \begin{align*}
        \mu^\pi_{n+1}(x',a') &= \sum_{x,a} p_{n+1}(x'|x,a) \mu_n^\pi(x,a) \pi_{n+1}(a'|x') \\
        &= \sum_{x,a} p_{n+1}(x'|x,a) \mu_n(x,a) \frac{\mu_{n+1}(x',a')}{\rho_{n+1}(x')} \\
        &= \sum_{a} \mu_{n+1}(x',a) \frac{\mu_{n+1}(x',a')}{\rho_{n+1}(x')} \\
        &= \rho_{n+1}(x') \frac{\mu_{n+1}(x',a')}{\rho_{n+1}(x')} \\
        &= \mu_{n+1}(x',a'),
    \end{align*}
where the first equality comes from Definition~\ref{mu_induced_by_pi}, the second equality comes from the induction assumption and the way we defined the strategy $\pi$, and the third comes from the assumption that $\mu \in \mathcal{M}_{\mu_0}$.

In the case $\rho_n(x) = 0$, we therefore have $\mu_n(x,a) = 0$ for all $a \in \mathcal{A}$, so any choice of $\pi_n(a|x)$ would work.

\end{proof}

%%%%%%%%%%%%%%%%%%%%%%%%%%%%%%%%%%%%

\section{Missing proofs: algorithm~\ref{alg:MD} scheme and convergence rate}\label{missing_proofs_2}

By abuse of notations, for any probability measure $\eta \in \Delta_E$ whatever the finite space $E$ on which it is defined we introduce the neg-entropy function, with the convention $0 \log(0) = 0$,
$$
\phi (\eta):=\sum_{x\in E} \eta (x)\log \eta (x) ,
$$ 
 to which we associate the Bregman divergence $D$, also known as the KL divergence, such that for any pair $(\eta,\nu)\in\Delta_E\times \Delta_E$, 
$$
D(\eta,\nu):=\phi(\eta)-\phi(\nu)-\langle \phi'(\nu),\eta-\nu\rangle.
$$

Let $\rho_n$ denote the marginal probability distribution on $\mathcal{X}$ associated with $\mu_n$ i.e., for all $x \in \mathcal{X}$
$$
\rho_n(x):= \sum_{a \in \mathcal{A}} \mu_n(x,a)\ .
$$
Observe that to any $\mu=(\mu_n)_{1 \leq n \leq N}\in \mathcal{M}_{\mu_0}$ one can associate a unique probability mass function on $\mathcal{P}(\mathcal{X}\times \mathcal{A})^{N}$ denoted by $\mu_{1:N}$ such that $\mu_{1:N}$ is \textit{generated} by the strategy $\pi=(\pi_n)_{1 \leq n \leq N}$ associated with $\mu$ which is determined by 
$$
\pi_{n}(a\vert x)=\frac{\mu_n(x,a)}{\rho_n(x)}\ ,
$$
when $\rho_n(x) \neq 0$, otherwise we fix an arbitrary strategy $\pi_n(a \vert x) = \frac{1}{|\mathcal{A}|}$.

Before proving Theorems~\eqref{MD_explicit_result} and \eqref{thm:convergence_rate} we state and prove a Lemma which is key to proving both theorems.

\begin{lemma}\label{prop:D_decomp}
For any $\mu \in \mathcal{M}_{\mu_0}$ and $\mu' \in \mathcal{M}_{\mu_0}^*$, with associated probability mass functions $\mu_{1:N},\mu'_{1:N}\in \mathcal{P}\big ((\mathcal{X}\times \mathcal{A})^{N}\big)$ \textit{generated} by $\pi,\pi'$ respectively with the same initial state-action distribution, i.e. $\mu_0 = \mu'_0$, we have
\begin{equation}\label{eq:youpi}
\begin{split}
D(\mu_{1:N},\mu'_{1:N})&=\sum_{n=1}^{N} \mathbb{E}_{(x,a) \sim \mu_n(\cdot)}\left[\log\bigg(\frac{\pi_{n}(a|x)}{\pi'_{n}(a|x)}\bigg)\right]\\
&={\displaystyle \sum_{n=1}^{N} D(\mu_n,\mu'_n)- \sum_{n=0}^{N} D(\rho_n,\rho'_n)}
\end{split}
\end{equation}
\end{lemma}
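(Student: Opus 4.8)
The plan is to establish the two equalities in \eqref{eq:youpi} in turn: first the chain-rule decomposition of the joint divergence into per-step policy terms, then the purely algebraic reduction of those terms to the state-action and state marginals.

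First I would make the trajectory law explicit. By the Markovian recursion of Definition~\ref{mu_induced_by_pi}, the joint mass function $\mu_{1:N}$ of $(x_1,a_1,\dots,x_N,a_N)$ factorizes as
\[
\mu_{1:N}(x_{1:N},a_{1:N}) = \rho_1(x_1)\,\pi_1(a_1|x_1)\prod_{n=1}^{N-1} p_{n+1}(x_{n+1}|x_n,a_n)\,\pi_{n+1}(a_{n+1}|x_{n+1}),
\]
and analogously for $\mu'_{1:N}$ with $\pi',\rho'$. Since $\mu_0=\mu'_0$, the time-$1$ state marginals coincide, $\rho_1=\rho'_1=\sum_{x,a}p_1(\cdot|x,a)\mu_0(x,a)$, and the kernels $p_{n+1}$ are common to both measures, so in the log-likelihood ratio everything except the policy factors cancels, leaving $\log(\mu_{1:N}/\mu'_{1:N})=\sum_{n=1}^N \log(\pi_n(a_n|x_n)/\pi'_n(a_n|x_n))$. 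Taking the expectation under $\mu_{1:N}$ gives $D(\mu_{1:N},\mu'_{1:N})=\sum_{n=1}^N\mathbb{E}_{\mu_{1:N}}[\log(\pi_n/\pi'_n)]$; because the $n$-th summand depends only on $(x_n,a_n)$, its trajectory expectation collapses to the marginal expectation under $\mu_n$, yielding the first equality. Here $\mu'\in\mathcal{M}_{\mu_0}^*$ guarantees $\pi'_n>0$ (hence $\rho'_n>0$), so all ratios are well defined, and when $\rho_n(x)=0$ the arbitrary choice of $\pi_n(\cdot|x)$ is immaterial since $\mu_n(x,\cdot)=0$ carries no mass.

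For the second equality I would substitute $\pi_n(a|x)=\mu_n(x,a)/\rho_n(x)$ and $\pi'_n(a|x)=\mu'_n(x,a)/\rho'_n(x)$ into each per-step term and split the logarithm:
\[
\mathbb{E}_{(x,a)\sim\mu_n}\!\left[\log\frac{\pi_n(a|x)}{\pi'_n(a|x)}\right] = \mathbb{E}_{\mu_n}\!\left[\log\frac{\mu_n(x,a)}{\mu'_n(x,a)}\right] - \mathbb{E}_{\mu_n}\!\left[\log\frac{\rho_n(x)}{\rho'_n(x)}\right].
\]
The first term is $D(\mu_n,\mu'_n)$ by definition; in the second, the integrand depends only on $x$, so marginalizing $\mu_n$ down to $\rho_n$ turns it into $\sum_x\rho_n(x)\log(\rho_n(x)/\rho'_n(x))=D(\rho_n,\rho'_n)$. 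Summing over $n=1,\dots,N$ and appending the vanishing term $D(\rho_0,\rho'_0)=0$ (valid since $\rho_0=\rho'_0$) to match the stated index range produces $\sum_{n=1}^N D(\mu_n,\mu'_n)-\sum_{n=0}^N D(\rho_n,\rho'_n)$. Along the way I would verify, from $\phi'(\nu)(x)=\log\nu(x)+1$ and $\sum_x\eta(x)=\sum_x\nu(x)=1$, that the Bregman divergence $D$ indeed coincides with the KL divergence $\sum_x\eta(x)\log(\eta(x)/\nu(x))$.

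The main obstacle is the clean justification of the trajectory factorization and the ensuing cancellation: one must argue that $\mu_{1:N}$ and $\mu'_{1:N}$ share exactly the same initial block $\rho_1$ and the same transition kernels $p_{n+1}$, so that the likelihood ratio reduces to a pure product of policy ratios. This is precisely where the hypotheses $\mu_0=\mu'_0$ and $\mu,\mu'\in\mathcal{M}_{\mu_0}$ enter, since the latter forces the state-marginal recursion to be driven by the kernels rather than the policies. The remaining steps are routine applications of the definition of $D$ and of Fubini-type marginalization.
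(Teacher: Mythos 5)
Your proof is correct and follows essentially the same route as the paper's: factorize the Markovian trajectory law, cancel the shared transition kernels (and shared initial block) in the log-likelihood ratio so that only the per-step policy ratios survive, collapse the trajectory expectation to the marginals $\mu_n$, and then substitute $\pi_n(a|x)=\mu_n(x,a)/\rho_n(x)$ and split the logarithm to obtain $\sum_{n=1}^N D(\mu_n,\mu'_n)-\sum_n D(\rho_n,\rho'_n)$. The only cosmetic difference is that you factorize over $(\mathcal{X}\times\mathcal{A})^{N}$ starting at time $1$ (using $\rho_1=\rho'_1$), whereas the paper carries the time-$0$ pair explicitly and cancels $\mu_0=\mu'_0$ inside the ratio; both are equivalent, and your remark that $D(\rho_0,\rho'_0)=0$ cleanly accounts for the $n=0$ term in the stated index range.
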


\begin{proof}
For each $1 \leq n \leq N$, let us define a transition matrix $P^{\pi_n}$ for all $x, x' \in \mathcal{X}$ and $a, a' \in \mathcal{A}$,
\begin{equation*}
    P^{\pi_n}(x',a'|x,a) := p_n(x'|x,a) \pi_n(a'|x').
\end{equation*}
Given Definition~\ref{mu_induced_by_pi}, for any randomized policy the state-action distributions evolve according to linear dynamics
\begin{equation*}
\mu_n(x',a') = \langle \mu_{n-1}(\cdot), P^{\pi_n}(x',a'| \cdot) \rangle.
\end{equation*}
Any randomized policy $\pi$ gives a probability mass function $\mu_{1:N}$  that is Markovian:
\begin{equation}\label{mu_matrix_decomp}
    \mu_{1:N}(\vec{y}) = \mu_0(y_0) P^{\pi_1}(y_1|y_0)...P^{\pi_N}(y_N|y_{N-1}),
\end{equation}
where $\vec{y}$ represents the elements of $(\mathcal{X} \times \mathcal{A})^{N+1}$ such that $y_i = (x_i,a_i)$ for all $0\leq i \leq N$. Note that $\mu_n(y_n)$ is the marginal probability mass function. 

Consider $\mu, \mu' \in \mathcal{M}_{\mu_0}$ the state-action distribution sequences induced by $\pi, \pi'$ respectively (i.e, $\mu = \mu^\pi$ and $\mu' = \mu^{\pi'})$. Thus, computing the relative entropy between the probability mass functions $\mu_{1:N},\mu'_{1:N}$ gives
\begin{align*}
    D(\mu_{1:N}, \mu'_{1:N}) &= \sum_{\vec{y}} \mu_{1:N}(\vec{y}) \log\left(\frac{\mu_{1:N}(\vec{y})}{\mu'_{1:N}(\vec{y})}\right) \\
    &= \sum_{y_0,...,y_N} \mu_{1:N}(\vec{y}) \log\left(\frac{\mu_0(y_0) P^{\pi_1}(y_1|y_0)...P^{\pi_N}(y_N|y_{N-1})}{\mu'_0(y_0) P^{\pi'_1}(y_1|y_0)...P^{\pi'_N}(y_N|y_{N-1})}\right) \\
    &= \sum_{y_0,...,y_N} \mu_{1:N}(\vec{y}) \sum_{i=1}^N\log\left(\frac{P^{\pi_i}(y_i|y_{i-1})}{P^{\pi'_i}(y_i|y_{i-1})}\right).
\end{align*}

Where
\begin{align*}
    \sum_{i=1}^N\log\left(\frac{P^{\pi_i}(y_i|y_{i-1})}{P^{\pi'_i}(y_i|y_{i-1})}\right) &= \sum_{i=1}^N\log\left( \frac{p_i(x_i|x_{i-1},a_{i-1}) \pi_i(a_i|x_i)}{p_i(x_i|x_{i-1},a_{i-1}) \pi_i'(a_{i}|x_i)} \right) \\
    &= \sum_{i=1}^N \log\left( \frac{ \pi_i(a_i|x_i)}{\pi_i'(a_{i}|x_i)} \right).
\end{align*}

Thus,
\begin{align*}
    D(\mu_{1:N}, \mu'_{1:N}) &= \sum_{\vec{y}} \mu_{1:N}(\vec{y}) \sum_{i=1}^N \log\left( \frac{ \pi_i(a_i|x_i)}{\pi_i'(a_{i}|x_i)} \right) \\
    &= \sum_{\vec{y}} \mu_0(y_0) P^{\pi_1}(y_1|y_0)...P^{\pi_N}(y_N|y_{N-1}) \sum_{i=1}^N \log\left( \frac{ \pi_i(a_i|x_i)}{\pi_i'(a_{i}|x_i)} \right) \\
    &= \sum_{i=1}^N \sum_{x \in \mathcal{X}}\sum_{a \in \mathcal{A}} \mu_i(x,a) \log\left(\frac{\pi_i(a|x)}{\pi'_i(a|x)}\right).
\end{align*}

Where for the last equality we used that
\[
    \sum_{y_0,...,y_{i-1}} \mu_0(y_0) P^{\pi_1}(y_1|y_0)...P^{\pi_{i}}(y_{i}| y_{i-1}) = \sum_{y_{i}} \mu_{i}(y_{i}) 
\]
and for a fixed $y_i$,
\[
    \sum_{y_{i+1},...,y_N} P^{\pi_{i+1}}(y_{i+1}|y_{i})...
    P^{\pi_{N}}(y_{N}| y_{N-1}) = 1.
\]

This proves the first equality of the Lemma. We now prove the second. For this, we recall that Proposition~\ref{opt_mu_equal_pi} gives a unique relation between a state-action distribution sequence $\mu \in \mathcal{M}_{\mu_0}$ and the policy sequence $\pi \in (\Delta_\mathcal{A})^{\mathcal{X} \times N}$ inducing it by taking for all $1 \leq i \leq N$, $(x,a) \in \mathcal{X} \times \mathcal{A}$, 
$$
\pi_{i}(a\vert x)=\frac{\mu_i(x,a)}{\rho_i(x)},\,
$$
where $\rho$ is the marginal on the states of $\mu$.
Using this relation, we have then that
\begin{equation*}
    \begin{split}
         D(\mu_{1:N},\mu'_{1:N}) &=  \sum_{i=1}^{N} \sum_{x \in \mathcal{X}} \sum_{a \in \mathcal{A}} \mu_{i}(x,a) \log\bigg(\frac{\pi_{i}(a|x)}{\pi'_{i}(a|x)}\bigg) \\
         &=  \sum_{i=1}^{N} \sum_{x \in \mathcal{X}} \sum_{a \in \mathcal{A}} \mu_{i}(x,a) \log\bigg(\frac{\mu_{i}(a|x)}{\rho_{i}(x)} \frac{\rho'_{i}(x)}{\mu'_{i}(a|x)}\bigg) \\
         &= \sum_{i=1}^{N} \sum_{x \in \mathcal{X}} \sum_{a \in \mathcal{A}} \mu_{i}(x,a) \log\bigg(\frac{\mu_{i}(a|x)}{\mu'_{i}(a|x)}\bigg) - \sum_{i=1}^{N} \sum_{x \in \mathcal{X}} \sum_{a \in \mathcal{A}} \mu_{i}(x,a) \log\bigg(\frac{\rho_{i}(x)}{\rho'_{i}(x)}\bigg) \\
         &=  \sum_{i=1}^{N} \sum_{x \in \mathcal{X}} \sum_{a \in \mathcal{A}} \mu_{i}(x,a) \log\bigg(\frac{\mu_{i}(a|x)}{\mu'_{i}(a|x)}\bigg) - \sum_{i=1}^{N} \sum_{x \in \mathcal{X}} \rho_{i}(x) \log\bigg(\frac{\rho_{i}(x)}{\rho'_{i}(x)}\bigg) \\
         &= \sum_{i=1}^{N} D(\mu_i, \mu_i') - \sum_{i=1}^{i} D(\rho_i, \rho_i') 
    \end{split}
\end{equation*}
which concludes the proof.
\end{proof}

\subsection{Proof of Theorem \ref{MD_explicit_result}: formulation of Algorithm~\ref{alg:MD}}\label{thm_1_proof}
\begin{proof}

At each iteration we seek to solve
\begin{equation}
\label{MD_opt_problem_appendix}
\begin{split}
\mu^{k+1} \in \argmin_{\mu^\pi\in\mathcal{M}_{\mu_0}}
\bigg\{
\langle \nabla F(\mu^k),\mu^\pi\rangle 
+\frac{1}{\tau_k} \sum_{n=1}^{N} \mathbb{E}_{(x,a) \sim \mu_n(\cdot)}\left[\log\bigg(\frac{\pi_{n}(a|x)}{\pi^k_{n}(a|x)}\bigg)\right]
\bigg\}\,
\end{split}
\end{equation}

where recall that $\langle \nabla F(\mu^k),\mu^\pi\rangle := \sum_{n=1}^N \langle \nabla f_n(\mu_n^k), \mu^\pi_n \rangle$. We further use that ${r_n(x_n, a_n, \mu_n) := - \nabla f_n(\mu_n)(x_n, a_n)}$.

Now, we use the optimality principle to solve this optimization problem with an algorithm backward in time. Remember that the initial distribution $\mu_0$ is always fixed. The equivalence between solving a minimization problem on sequences of state-action distributions in $\mathcal{M}_{\mu_0}$ and on sequences of policies in $(\Delta_\mathcal{A})^{\mathcal{X} \times N}$ (see Proposition~\ref{opt_mu_equal_pi}), allows us to reformulate Problem~\eqref{MD_opt_problem_appendix} on $\mathcal{M}_{\mu_0}$ into a problem on $(\Delta_\mathcal{A})^{\mathcal{X} \times N}$, thus

\begin{equation*}
    \begin{split}
        \eqref{MD_opt_problem_appendix} &= \max_{\pi \in (\Delta_\mathcal{A})^{\mathcal{X} \times N}} \bigg\{\sum_{n=0}^N \sum_{x,a} \mu_n^\pi(x,a) r_n(x,a,\mu_n^k) \\
        &\qquad- \frac{1}{\tau_k} \sum_{n=1}^N \sum_{x,a} \mu_{n-1}^\pi(x,a) \sum_{x',a'} p_n(x'|x,a) \pi_n(a'|x') \log\left( \frac{\pi_n(a'|x')}{\pi_n^k(a'|x')}\right) \bigg\} \\
        &= \max_{\pi \in (\Delta_\mathcal{A})^{\mathcal{X} \times N}} \bigg\{\sum_{n=0}^N \sum_{x,a} \mu^\pi_n(x,a) \bigg[ r_n(x,a,\mu_n^k) \\
        &\qquad - \frac{1}{\tau_k} \sum_{x',a'} p_{n+1}(x'|x,a) \pi_{n+1}(a'|x') \log\left( \frac{\pi_{n+1}(a'|x')}{\pi_{n+1}^k(a'|x')}\right) \bigg] \bigg\}\\
        &= \max_{\pi \in (\Delta_\mathcal{A})^{\mathcal{X} \times N}} \bigg\{\mathbb{E}_\pi \bigg[ r_N(x_N,a_N,\mu_N^k) + \sum_{n=0}^{N-1}  r_n(x_n,a_n,\mu_n^k) \\
        &\qquad- \frac{1}{\tau_k} \sum_{x',a'} p_{n+1}(x'|x_n,a_n) \pi_{n+1}(a'|x') \log\left( \frac{\pi_{n+1}(a'|x')}{\pi_{n+1}^k(a'|x')}\right) \bigg] \bigg\}.
    \end{split}
\end{equation*}

Let us define a regularized version of the state-action value function that we denote by $\Tilde{Q}^k$, such that for all $1 \leq i \leq N$, $(x,a) \in \mathcal{X} \times \mathcal{A}$,
\begin{equation}
\begin{split}
    \Tilde{Q}^k_i(x,a) &=  \max_{\pi_{i+1:N} \in (\Delta_\mathcal{A})^{\mathcal{X} \times {N-i}}} \mathbb{E}_\pi \bigg[ r_N(x_N,a_N,\mu_N^k) + \sum_{n=i}^{N-1} \bigg\{ r_n(x_n,a_n,\mu_n^k) \\
    &- \frac{1}{\tau_k} \sum_{x',a'} p_{n+1}(x'|x_n,a_n) \pi_{n+1}(a'|x') \log\left( \frac{\pi_{n+1}(a'|x')}{\pi_{n+1}^k(a'|x')}\right) \bigg\} \bigg| (x_i,a_i) = (x,a) \bigg],  
\end{split}
\end{equation}
where $\pi_{i+1:N} = \{\pi_{i+1},...,\pi_N\}$.

First, note that $\mathbb{E}_{(x,a) \sim \mu_0(\cdot)}[\Tilde{Q}^k_0(x,a)] =$ \eqref{MD_opt_problem_appendix}. Moreover, the optimality principle states that this regularized state-action value function satisfies the following recursion
\begin{equation*}
    \begin{cases}
    \Tilde{Q}_N(x,a) = r_N(x,a, \mu_N^k) \\
    \!\begin{aligned}
            \Tilde{Q}_i(x,a) &= \max_{\pi_{i+1} \in (\Delta_\mathcal{A})^{\mathcal{X}}} \bigg\{ r_i(x,a,\mu_i^k) + \\
            &\qquad \sum_{x'} p_{i+1}(x'|x,a) \sum_{a'} \pi_{i+ 1}(a'|x') \left[  - \frac{1}{\tau_k}\log\left( \frac{\pi_{i+1}(a'|x')}{\pi_{i+1}^k(a'|x')}\right)  + \Tilde{Q}_{i+1}(x',a') \right] \bigg\}.
    \end{aligned}
    \end{cases}
\end{equation*}

Thus, to solve \eqref{MD_opt_problem_appendix} we compute backwards in time, i.e. for $i = N-1,...,0$, for all $x \in \mathcal{X}$,
\begin{equation*}
    \pi_{i+1}^{k+1}(\cdot|x) \in \argmax_{\pi(\cdot|x) \in \Delta_\mathcal{A}} \left\{\big\langle \pi(\cdot|x), \Tilde{Q}_{i+1}^k(x,\cdot) \big\rangle - \frac{1}{\tau_k} D\big(\pi(\cdot|x), \pi^k_{i+1}(\cdot|x)\big) \right\},
\end{equation*}
where $D$ is the KL divergence.

The solution of this optimisation problem for each time step $i$ can be found by writing the Lagrangian function $\mathcal{L}$ associated. Let $\lambda$ be the Lagrangian multiplier associated to the simplex constraint. For simplicity, let $\pi_x := \pi(\cdot|x)$, $\pi_x^k := \pi_{i+1}^k(\cdot|x)$ and $\tilde{Q}^k_x := \tilde{Q}_{i+1}^{k}(x,\cdot)$. Thus,
\begin{equation*}
    \mathcal{L}(\pi_x, \lambda) = \langle \pi_x, \tilde{Q}^k_x \rangle - \frac{1}{\tau_k} D(\pi_x, \pi_x^k) - \lambda \left(\sum_{a \in \mathcal{A}} \pi_x(a) - 1 \right).
\end{equation*}
Taking the gradient of the Lagrangian with respect to $\pi_x(a)$ for each $a \in \mathcal{A}$ gives
\begin{equation*}
    \frac{\partial \mathcal{L}}{\partial \pi_x(a)} = \tilde{Q}^k_x(a) - \frac{1}{\tau_k} \log\left(\frac{\pi_x(a)}{\pi_x^k(a)}\right) - \frac{1}{\tau_k} - \lambda,
\end{equation*}
and thus
\begin{equation*}
    \begin{split}
        \frac{\partial \mathcal{L}}{\partial \pi_x(a)} = 0 \Longrightarrow \quad \pi_x(a) = \pi_x^k(a) \exp{\left(\tau_k \tilde{Q}_x^k(a) - 1 - \tau_k\lambda\right)}.
    \end{split}
\end{equation*}

Applying the simplex constraint, $\sum_{a\in \mathcal{A}} \pi_x(a) = 1$, we find the value of the Lagrangian multipler $\lambda$, and we get for all $a \in \mathcal{A}$

\begin{equation*}
   \pi_x(a) = \frac{\pi_x^k(a) \exp{\left(\tau_k \tilde{Q}_x^k(a)\right)}}{\sum_{a'\in \mathcal{A}} \pi_x^k(a') \exp{\left(\tau_k \tilde{Q}_x^k(a')\right)},}
\end{equation*}
which proves the theorem. 

\end{proof}

\subsection{Proof of Proposition~\ref{penalization_is_bregman}}\label{penalization_is_bregman_proof}

\begin{proof}
 Lemma~\ref{prop:D_decomp} states that 
 \begin{equation*}
     \Gamma(\mu, \mu') := \sum_{n=1}^{N} \mathbb{E}_{(x,a) \sim \mu_n(\cdot)}\left[\log\bigg(\frac{\pi_{n}(a'|x')}{\pi^k_{n}(a'|x')}\bigg)\right]
={\displaystyle \sum_{t=0}^{n} D(\mu'_t,\mu_t)- \sum_{t=0}^{n} D(\rho'_t,\rho_t)}.
 \end{equation*}
 
Recall that $\phi$ is the negentropy and that $D$ is the Bregman divergence induced by the negentropy. Define the function $\psi: (\Delta_{\mathcal{X} \times \mathcal{A}})^N \to \mathbb{R}$ such that
\begin{equation*}
        \psi(\mu) := \sum_{n=0}^N \phi(\mu_n) - \sum_{n=0}^N \phi(\rho_n).
\end{equation*}
Note that for $\mu, \mu' \in (\Delta_{\mathcal{X} \times \mathcal{A}})^N $ with marginals given by $\rho, \rho' \in (\Delta_{\mathcal{X}})^N$, using the second equality of Lemma~\ref{prop:D_decomp}, 
    \begin{equation*}
        \psi(\mu) - \psi(\mu') - \langle \nabla \psi(\mu'), \mu - \mu' \rangle = \Gamma(\mu, \mu').
    \end{equation*}
    Thus, for $\Gamma$ to be a Bregman divergence it is sufficient to show that $\psi$ is a convex function. Recall that the marginal $\rho$ is such that for each $1 \leq n \leq N$, and for all $x \in \mathcal{X}$, $\rho_n(x) = \sum_{a \in \mathcal{A}} \mu_n(x,a)$. Thus,
    \begin{equation*}
    \begin{split}
        \psi(\mu) &= \sum_n \left[\sum_{x,a} \mu_n(x,a) \log(\mu_n(x,a)) - \sum_{x} \rho_n(x) \log(\rho_n(x)) \right]  \\
        &= \sum_n \sum_{x,a} \mu_n(x,a) \log\left(\frac{\mu_n(x,a)}{\sum_{a'}\mu_n(x,a')}\right).
    \end{split}
    \end{equation*}
    Computing the first order partial derivative of $\psi$ with respect to $\mu_n(x,a)$ for any  $(x,a) \in \mathcal{X} \times \mathcal{A}$ and $1 \leq n \leq N$, we get
        \begin{equation*}
    \begin{split}
        \frac{\partial \psi}{\partial \mu_n(x,a)}(\mu) &= \log\left(\frac{\mu_n(x,a)}{\sum_{a'}\mu_n(x,a')}\right) + \mu_n(x,a)\frac{1}{\mu_n(x,a)} - \sum_{a'}\mu_n(x,a') \frac{1}{\sum_{a'}\mu_n(x,a')} \\
        &= \log\left(\frac{\mu_n(x,a)}{\sum_{a'}\mu_n(x,a')}\right) \\
        &=  \log\left(\frac{\mu_n(x,a)}{\rho_n(x)}\right).
    \end{split}
    \end{equation*}
    
    Now we apply the following convexity property \citep{boyd}: $\psi$ is convex if and only if for all $\mu, \mu' \in (\Delta_{\mathcal{X} \times \mathcal{A}})^N $, $\langle \psi'(\mu) - \psi'(\mu'), \mu - \mu'\rangle \geq 0$.
    Indeed,
    \begin{equation*}
    \begin{split}
        \langle \psi'(\mu) - \psi'(\mu'), \mu - \mu'\rangle &= \sum_n \sum_{x,a} \left[\frac{\partial \psi}{\partial \mu_n(x,a)}(\mu) - \frac{\partial \psi}{\partial \mu_n(x,a)}(\mu') \right] \big(\mu_n(x,a) - \mu '_n(x,a)\big) \\
        &= \sum_n \sum_{x,a} \left[\log\left(\frac{\mu_n(x,a)}{\rho_n(x)}\right) - \log\left(\frac{\mu'_n(x,a)}{\rho'_n(x)}\right) \right] \big(\mu_n(x,a) - \mu '_n(x,a)\big) \\
        % &= \sum_n \sum_{x,a} \log\bigg(\frac{\mu_n(x,a)}{\mu_n'(x,a)}\bigg)\big(\mu_n(x,a) - \mu_n'(x,a)\big) - \sum_n \sum_{x} \log\bigg(\frac{\rho_n(x)}{\rho_n'(x)}\bigg)\big(\rho_n(x) - \rho_n'(x)\big) \\
        &\overset{(a)}{=} \sum_n D(\mu_n, \mu_n') + D(\mu_n, \mu'_n) - D(\rho_n, \rho'_n) - D(\rho'_n, \rho_n) \\
        &\overset{(b)}{=} \Gamma(\mu, \mu') + \Gamma(\mu', \mu) \\
        &\overset{(c)}{=} D(\mu_{1:N}, \mu'_{1:N}) + D(\mu'_{1:N}, \mu_{1:N}) \overset{(d)}{\geq} 0,
    \end{split}
    \end{equation*}
    where $(a)$ comes from the definition of the KL divergence $D$, $(b)$ comes from the definition of $\Gamma$, $(c)$ comes from Lemma~\ref{prop:D_decomp} and $(d)$ comes from a property of Bregman divergences that they are always positive.
    As $\psi$ is convex and induces the divergence $\Gamma$ then $\Gamma$ is a Bregman divergence.
    After writing this proof, we came across a different strategy to prove that $\Gamma$ is a Bregman divergence that is presented in Appendix A of \citet{Neu17}.

Now we prove that $\Gamma$ is $1$-strongly convex with respect to the $\sup_{1 \leq n \leq N}\|\cdot\|_1$ norm. By Lemma~\ref{prop:D_decomp},
    \begin{equation*}
        \begin{split}
            \Gamma(\mu, \mu') &= \sum_{n=1}^N D(\mu_n,\mu_n')-\sum_{n=1}^N D(\rho_n,\rho'_n) \\
            &=D(\mu_{1:N},\mu'_{1:N}) \\
            &\geq 2\Vert \mu_{1:N}-\mu_{1:N}\Vert^2_{\textrm{TV}}\\
            &=\frac{1}{2} \Vert \mu_{1:N}-\mu'_{1:N}\Vert_1^2,
        \end{split}
    \end{equation*}
the last inequality being a consequence of Pinsker's inequality. The norm $\|\cdot\|_{\text{TV}}$ stands for the total variation norm.
Let $y$ represent an element of $(\mathcal{X} \times \mathcal{A})^{N+1}$ such that $y_i \in \mathcal{X} \times \mathcal{A}$ for all $1 \leq i \leq N$. Observe that
\begin{eqnarray*}
\Vert \mu_{1:N}-\mu'_{1:N}\Vert_1
&=&\sum_{y\in (\mathcal{X} \times \mathcal{A})^{N+1}}  \vert \mu_{1:N}(y)-\mu'_{1:N}(y)\vert \\
&\geq&
\sum_{y_n\in \mathcal{X} \times \mathcal{A}} \bigg\vert \sum_{y_s \in \mathcal{X} \times \mathcal{A}\,,\,s\neq n}
\big( \mu_{1:N}(y)-\mu'_{1:N}(y)\big) \bigg\vert
\\
&=&\sum_{y_n\in \mathcal{X} \times \mathcal{A}} \vert \mu_n(y_n)-\mu'_n(y_n)\vert \quad \textrm{for all} \ n\in \{1,\cdots, N\}.
\end{eqnarray*}
In particular,
\begin{eqnarray*}
        \Vert \mu_{1:N}-\mu'_{1:N}\Vert_1 &\geq& \sup_{1\leq n\leq N}\Vert \mu_n-\mu'_n\Vert_1\ .
\end{eqnarray*}
This implies that
\begin{eqnarray*}
\Gamma(\mu,\mu') &\geq &\frac{1}{2} \sup_{1\leq n\leq N} \Vert \mu_{n}-\mu'_{n}\Vert_1^2\ ,
\end{eqnarray*}
proving that $\Gamma$ is $1$-strongly convex with respect to the $\sup_{1 \leq n \leq N}\|\cdot\|_1$ norm.

\end{proof}

\subsection{Complements of the proof of Theorem~\ref{thm:convergence_rate}}\label{proof_conv_MD_MFC} 
\begin{proof}
Here we  prove that if $(f_n)_{1 \leq n \leq N}$ are convex and Lipschitz with respect to the L$1$-norm, then so is $F$.  
\textbf{Convexity:} $F$ is convex as the sum of convex functions.

\textbf{Lipschitz:} Let $\mu, \mu' \in (\mathcal{X} \times \mathcal{A})^N$. As $f_n$ is Lipschitz with respect to $\|\cdot\|_1$ with constant $l_n$, then $|f_n(\mu_n) - f_n(\mu'_n)| \leq l_n \|\mu_n - \mu_n\|_1$ for all $1 \leq n \leq N$. 
Therefore,
\begin{equation*}
    \begin{split}
        |F(\mu) - F(\mu')| &= \bigg| \sum_{n=1}^N f_n(\mu_n) - f_n(\mu'_n) \bigg| \\
        &\leq \sum_{n=1}^N |f_n(\mu_n) - f_n(\mu'_n)| \\
        &\leq \sum_{n=1}^N l_n \|\mu_n - \mu_n'\|_1 \\
        &\leq \bigg(\sum_{n=1}^N l_n^2\bigg)^{1/2} \bigg(\sum_{n=1}^N \|\mu_n - \mu'_n \|_1^2 \bigg)^{1/2} \\
        &\leq L \|\mu - \mu'\|_1,
    \end{split}
\end{equation*}
where we use Cauchy-Schwarz in the second to last inequality.
Therefore, $F$ is Lipschitz with respect to the L$1$-norm with constant ${L := \big(\sum_{n=1}^N l_n^2)^{1/2}.}$

\end{proof}

\section{Algorithms}\label{algo_appen}

\begin{algorithm}[H]
\caption{Fictitious play for MFG (FP)}\label{alg:FP}
\begin{algorithmic}
    \STATE {\bfseries Input:} number of iterations K, initial policy $\pi^0$.
    \STATE {\bfseries Initialization:} $\bar{\mu}^0 = \mu^{\pi^0}$ as in Definition~\ref{mu_induced_by_pi}.
    \FOR{$k= 0,...,K$}
    \STATE $\pi^{k+1} \in \argmax_{\pi} J(\pi, \bar{\mu}^k)$, best response against $\bar{\mu}^k$.
    \STATE $\bar{\mu}^{k+1} = \frac{1}{k+1} \mu^{\pi^{k+1}} + \frac{k}{k+1} \bar{\mu}^k$.
    \ENDFOR
\STATE {\bfseries Return:} $\bar{\mu}^K$ and $\bar{\pi}^K$ s.t. $\bar{\pi}^K_n(a|x) := \sum_{k=0}^K \frac{\rho^{\pi^k}_n(x) \pi_n^k(a|x)}{\sum_{k=0}^K \rho^{\pi^k}_n(x)}$, \big($\rho_n^{\pi^k}(x) := \sum_{a\in \mathcal{A}} \mu_n^{\pi^k}(x,a)$ for all $k \leq K$\big).
\end{algorithmic}
\end{algorithm}

\begin{algorithm}[H]
\caption{Frank Wolfe}\label{alg:FW}
\begin{algorithmic}
 \STATE {\bfseries Input:} number of iterations K, initial distribution $\mu^0$, sequence $(\eta_k)_k$.
    \FOR{$k= 0,...,K$}
    \STATE $\mu^k \in \argmin_{\mu \in \mathcal{M}} \left\langle \mu, \nabla F(\bar{\mu}^k) \right\rangle_{|\mathcal{X} \times \mathcal{A}|}$.
    \STATE $\bar{\mu}^{k+1} = (1 - \eta_{k+1}) \bar{\mu}^k + \eta_{k+1} \mu^k$.
    \ENDFOR
\STATE {\bfseries Return:} $\bar{\mu}^K$
\end{algorithmic}
\end{algorithm}

The Online Mirror Descent for MFG algorithm uses the regular state-value function (or $Q$-function) at each iteration. It's definition is given by
\begin{equation}\label{Q_func_mfg}
    \begin{split}
        Q_n^{\pi,\mu}(x,a) &:= \mathbb{E}_\pi\left[\sum_{i=n}^N r_i(x_i,a_i,\mu_i) \bigg| x_n = x, a_n = a\right].
    \end{split}
\end{equation}
Note that, considering an initial state-action distribution $\mu_0$, $J_{\mu_0}(\pi, \mu) = \mathbb{E}_{(x,a) \sim \mu_0}[Q^{\pi, \mu}_0(x,a)].$ Furthermore, $Q^{\pi,\mu}$ is the solution of the backward equation, for all $n < N$, $(x,a) \in\mathcal{X} \times\mathcal{A}$:
\begin{equation}\label{Q_func_bellman_mfg}
\begin{cases}
    Q^{\pi,\mu}_N(x,a) = r_N(x,a,\mu_N) \\
    \!\begin{aligned}
    Q_n^{\pi,\mu}&(x,a) = r_n(x,a,\mu_n) + \sum_{x'} p(x'|x,a) \sum_{a'} \pi_{n+1}(a'|x') Q_{n+1}^{\pi,\mu}(x',a').
    \end{aligned}
\end{cases}
\end{equation}

\begin{algorithm}[H]
\caption{OMD for MFG}\label{alg:OMD_MFG}
\begin{algorithmic}
\STATE {\bfseries Input:} number of iterations K, $\pi^0 \in (\Delta_\mathcal{A})^{\mathcal{X} \times N}$.
    \FOR{$k= 0,...,K$}
    \STATE $\mu^k := \mu^{\pi^k}$, as in Definition~\ref{mu_induced_by_pi}.
    % \Rightarrow \mu_{n+1}^k(x') = \sum_{x,a} \pi_n^k(a|x) p(x'|x,a) \mu_n^{\pi^k}(x)$, $\forall x'\in \mathcal{X}, \forall n\leq N$.
    \STATE $Q^k := Q^{\pi^k, \mu^k}$ as in Equation~\eqref{Q_func_bellman_mfg}.
    \STATE $\pi_n^{k+1}(\cdot | x) := \argmax_{\pi(\cdot|x) \in \Delta_\mathcal{A}} \langle Q_n^k(x, \cdot), \pi(\cdot|x) \rangle + \tau D \left(\pi(\cdot|x), \pi^k_n(\cdot|x)  \right)$, $\forall x \in \mathcal{X}, \forall n \leq N$.
    \ENDFOR
    \STATE  {\bfseries Return:} $\mu^K, \pi^K$
\end{algorithmic}
\end{algorithm}

\section{Water heater application}\label{water heater-behavior}
\subsection{Standard cycling behavior of one water heater}\label{nominal_behavior}
Let us consider a time window $[t_0, t_0+T]$, and consider a discretisation of the time such that $t_n = t_0 + n \delta_t$ for $n = 0,...,N$, and $\delta_t = T/N$ the time frequency. At each time step $t_n$ (that for short we call $n$), the state of a water heater is described by a variable $X_n = (m_n, \theta_n) \in \{0,1\} \times \mathbb{R}^+$, where $m_n$ indicates the operating state of the heater (ON if $1$, OFF if $0$), and $\theta_n$ represents the average temperature of the water in the tank. 

The evolution of the temperature in the next time step $t_{n+1}$ is given by $\theta_{n+1} = \bar{T}_{t_{n+1}}^{t_{n}, m_n, \theta_n}$, where $t \mapsto \bar{T}_{t}^{t_{n}, m_n, \theta_n}$ is the solution of the ordinary differential equation (ODE) in Equation~\eqref{temperature_ode} on the interval $[t_{n}, t_{n+1}]$. This ODE models the impact of the heat loss to the environment temperature ($T_\text{amb}$), the Joule effect (heating) and water drains (hot water being withdrawn from the tanks for showers, taps, etc),
\begin{equation}\label{temperature_ode}
    \begin{cases}
    \frac{dT(t)}{dt} = -\underbrace{\rho (T(t)-T_{\text{amb}})}_{\text{heat loss}} + \underbrace{\sigma m_n p_{\text{max}}}_{\text{Joule effect}} - \underbrace{\tau(T(t) - T_{\text{in}})f(t)}_{\text{water drain}} \\
    T(t_n) = \theta_n.
    \end{cases}
\end{equation}

The parameters $\rho, \sigma, \tau$ are technical parameters of the water heater, $p_{\text{max}}$ is the maximum power, $T_\text{in}$ denotes the temperature of the cold water entering the tank, and $f(t)$ denotes the drain function. 

The dynamics follow a cyclic ON/OFF decision rule with a deadband to ensure that the temperature is between a lower limit $T_\text{min}$ and an upper limit $T_\text{max}$. Thus, if the water heater is turned on, it heats water with the maximum capacity until its temperature exceeds $T_\text{max}$. Then, the heater turns off. The water temperature then decreases until it reaches $T_\text{min}$, then the heater turns on again and a new cycle begins. Therefore, the nominal dynamics at a discretized time is given by Equation~\eqref{nominal_dynamics} and is illustrated at Figure~\ref{fig:nominal_temp_example}.
\begin{equation}\label{nominal_dynamics}
    \begin{cases}
    \theta_{n+1} = \bar{T}_{t_{n+1}}^{t_{n}, m_n, \theta_n} \\
    m_{n+1} = \begin{cases}
    m_n, \quad \text{if} \; \theta_{n+1} \in [T_\text{min},T_\text{max}] \\
    0, \quad \text{if} \; \theta_{n+1} \geq T_\text{max} \\
    1, \quad \text{if} \; \theta_{n+1} \leq T_\text{min}.
    \end{cases}
    \end{cases}
\end{equation}
Note that assuming the temperature set is finite prevents us from using the ODE on Equation~\eqref{temperature_ode} to compute the evolution of the mean temperature. In addition, we also have trouble computing the drain function $f(t)$, which in practice is not deterministic. Instead, we adapt this ODE to simplify our system. We start by making an Euler discretization of the ODE. We define a sequence $(d_n)_n$ denoting the amount of draining in liters at each time step. To decide whether hot water is drawn at each time step, we also consider a sequence $(\epsilon_n)_n$ of independent random variables following Bernoulli's laws of parameters $(q_n)_n$ respectively. The interest of having different parameters for each time step is to take into account the moments of the day when people are more inclined to use hot water (for taking a shower, doing the dishes, etc.). Assuming the existence of an independent water discharge at each time step is justified by assuming that the time frequency $\delta_t$ is large enough to contain all the time when hot water will be drawn from the water heater tank for a single use. In the interest of more realistic dynamics, we intend to weaken this assumption in future work. Therefore, we define
 \begin{equation}\label{appr_temperature_dynamics}
 \begin{split}
     \theta'_{n+1} = \theta_n + \delta_t\big( -\rho\left(\theta_n - T_\text{amb}\right) + \sigma m_n p_\text{max} - \epsilon_n \tau \left(\theta_n - T_\text{in}\right) d_n\big).
     \end{split}
 \end{equation}
 
 To tackle the finite-temperature state space problem, we assume that the space of possible temperatures $\Theta$ contains only integers from $T_\text{amb}$ (the room temperature) up to $T_\text{max}$, assuming that $T_\text{amb} < T_\text{min}$ (it is reasonable to assume that the ambient temperature is below the minimum temperature accepted for the heater). Given the dynamics of the operating state, $\theta_{n+1}$ never exceeds $T_\text{max}$ (the heater turns off when it reaches $T_\text{max}$ and when it is turned off, its temperature only decreases). On the other hand, drain may allow a temperature to be lower than $T_\text{min}$, but we assume that $T_\text{amb}$ is small enough that the mean temperature is never lower than it. Therefore, we can take $\theta_{n+1} = \text{Round}(\theta_{n+1}')$, where
 \begin{equation*}
     \text{Round}(\theta) = \begin{cases}
     \lfloor \theta \rfloor, \quad \text{if } B(\theta) = 0 \\
     \lceil \theta \rceil, \quad \text{if } B(\theta) = 1,
     \end{cases}
 \end{equation*}
 and $B(\theta)$ is a random variable following a Bernoulli of parameter $\theta - \lfloor \theta \rfloor$. Thus, the closer $\theta$ is to its smallest nearest integer, the greater the probability that we approximate $\theta$ by it, and vice-versa. We perform stochastic rounding instead of deterministic to have an unbiased temperature estimator, i.e. $\mathbb{E}[\theta_{n+1}] = \theta_{n+1}'$.

 \subsection{Complements of the proof of Theorem~\ref{thm:convergence_rate} for the DSM problem}\label{proof_conv_heater}
 
We show that the cost function considered in Problem~\eqref{main_optimisation_problem} concerning the water heater optimisation problem is convex and Lipschitz with respect to the $L_1$ norm $\|\cdot\|_1$.

    \paragraph{Convexity} for all $n\leq N$, each $f_n$ is given by
    \begin{equation*}
        f_n(\mu_n) = \left( \sum_{x,a} \mu_n(x,a) \varphi(x) - \gamma_n \right)^2.
    \end{equation*}
    Let $g$ be a real function such that $g(x) = (x - \gamma_n)^2$. The function $g$ is convex and non-decreasing on $\mathbb{R}_+$. 
    
    Let $h: \mathbb{R}^{|\mathcal{X} \times \mathcal{A}|} \rightarrow \mathbb{R}$, such that ${h(\mu_n) = \sum_{x,a} \mu_n(x,a) \varphi(x)}$. Note that ${\frac{\partial h}{\partial \mu_n(x,a)}(\mu_n) = \varphi(x)}$. Thus, for any $\mu_n, \mu_n'\in \Delta_{\mathcal{X} \times \mathcal{A}}$,
    \begin{equation*}
        h(\mu_n) - h(\mu_n') = \left( \sum_{x,a} \big(\mu_n(x,a) - \mu_n'(x,a) \big) \varphi(x) \right) = \langle \nabla h(\mu_n'), \mu_n - \mu_n' \rangle,
    \end{equation*}
    therefore, the function $h$ is also convex.
    As $f_n(\mu_n) = g(h(\mu_n))$, then $f_n$ is convex as $g$ and $h$ are convex, and $g$ is non decreasing in a univariate domain \citep{boyd}.

    \paragraph{Lipschitz} As $f_n$ is convex for all $1 \leq n \leq N$, to show that it is Lipschitz with respect to the $\|\cdot\|_1$ norm, it suffices to show that the sup-norm $\|\cdot\|_\infty$ of $\nabla f_n$ is bounded (the sup-norm is the dual norm of the $L_1$ norm). This result can be found in Lemma $2.6$ of \citet{shavel_opt}. 

    For any $\mu_n \in \Delta_{\mathcal{X} \times \mathcal{A}}$, 
    \begin{equation*}
        \begin{split}
            \|\nabla f_n(\mu)\| &= \sup_{ (x,a) \in \mathcal{X} \times \mathcal{A}} |\nabla f_n(\mu_n)(x,a)| \\
            &= 2 \sup_{ (x,a) \in \mathcal{X} \times \mathcal{A}}|\mu_n(\varphi) - \gamma_n| |\varphi(x)| \\
            &= 2 \sup_{ (x,a) \in \mathcal{X} \times \mathcal{A}} \bigg| \sum_{x',a'} \mu_n(x',a') \varphi(x') - \gamma_n \bigg| |\varphi(x)| \\
             &= 2 \sup_{ x \in \mathcal{X} } | \langle \rho_n, \varphi \rangle | |\varphi(x)| \\
            & \leq 2 \| \varphi \|_\infty^2.
        \end{split}
    \end{equation*}
    Thus, $f_n$ is Lipschitz with respect to the $L_1$ norm with Lipschitz constant $l_n = 2 \| \varphi \|_\infty^2$. In our particular case $\varphi$ is bounded by $1$ (see its definition in Equation~\eqref{varphi_func}), hence $l_n = 2$ for all $1 \leq n \leq N$. 

 \subsection{Simulation of the nominal behavior of a water heater}\label{water_heater_sim}
 
 Here we explain in details how the nominal dynamics are simulated in order to obtain the results in Section~\ref{results}.
 
 To simulate the nominal dynamics we use the nominal model presented in Equation~\eqref{nominal_dynamics} with the average temperature evolution introduced in Equation~\eqref{appr_temperature_dynamics}. To compute the sequences $(d_n)_n$ and $(q_n)_n$ regarding the amount of draining in liters and the probability of having a water withdrawal for each time step, respectively, we use data from the SMACH (\textit{Simulation Multi-Agents des Comportements Humains}) platform \citep{smach}, which simulates power consumption of people in their homes separated by appliance. The data we use simulates the consumption of $5132$ water heaters at a time step of one minute over a week in the summer of $2018$. 
 
  Since we want a time step large enough to contain all the time that hot water will be drawn from the water heater tank for a single use, we take $\delta_t = 10$ minutes instead of one minute (as initially provided by the data). Therefore we transform the data to contain for each water heater the average discharge over each $10$ minute interval. To compute $d_n$, we take the average discharge in liters over all water heaters with a water withdrawal during this time step. To calculate $(q_n)_n$, we calculate the percentage of water heaters with a water withdrawal over the entire population for each time step. The values of the parameters $\rho, \sigma, \tau$ and $p_\text{max}$ are computed in Equation~\eqref{parameters} using the variables introduced in Tables~\ref{table:water_heater_param} and~\ref{table:other_param}. We take $T_\text{min} = 50^\circ C$, $T_\text{max}=65^\circ C$, $T_\text{amb} = 25^\circ C$ and $T_\text{in} = 18^\circ C$.

\begin{table}[h!]
\caption{Water heater intrinsic parameters.}
\label{table:water_heater_param}
\vskip 0.15in
\begin{center}
\begin{small}
\begin{tabular}{cc}
\toprule
 Volume & $0.2$m$^3$ \\ 
 Height & $1.37$m\\
 EI (thickness of isolation) & $\frac{0.035}{4}$m \\
 $p_\text{max}$ & $3600 * 2200$W (in one hour)  \\
\bottomrule
\end{tabular}
\end{small}
\end{center}
\vskip -0.1in
\end{table}

\begin{table}[h!]
\caption{Other parameters specifications to compute Equation~\ref{parameters}.}
\label{table:other_param}
\vskip 0.15in
\begin{center}
\begin{small}
\begin{tabular}{cc}
\toprule
 denWater (water density) & $1000$ kg m$^{-3}$\\ 
 capWater (water capacity) & $4185$ J kg$^{-1}$ K$^{-1}$ \\
 CI (heat conductivity) & $0.033$ W/(m K) \\
 coefLoss (loss coeff.) & $\frac{\text{CI}}{\text{EI}} * 2 * 3.14 \sqrt{\frac{\text{vol} * 3.14}{\text{height}}}$  \\
\bottomrule
\end{tabular}
\end{small}
\end{center}
\vskip -0.1in
\end{table}

\begin{equation}\label{parameters}
\begin{split}
    \rho &= \frac{\text{coefLoss} * 3600}{\text{capWater} * \text{denWater} * \text{vol/height}} \quad \text{(fraction of heat loss by hour)} \\
    \sigma &= (\text{vol} * \text{denWater} * \text{capWater})^{-1} \\
    \tau &= (\text{vol} * \text{denWater})^{-1}.
    \end{split}
\end{equation}

%%%%%%%%%%%%%%%%%%%%%%%%%%%%%%%%%%%%%%%%%%%%%%%%%%%%%%%%%%%%%%%%%%%%%%%%%%%%%%%

\section{Potential games discussion}\label{potential_games_discussion}

In Subsection~\ref{potential_games} we mention an equivalence between the control Problem~\eqref{main_optimisation_problem} and a game problem in order to be able to compare Algorithm~\ref{alg:MD} with learning algorithms for MFG in the literature. For this, we use results similar to those of \citet{concave_utility} and we refer to it for further definitions on a MFG structure and the notion of Nash equilibrium (NE). 

In a mean field game problem, the goal of a representative player is to find a sequence of policies $\pi$ that maximises the expected sum of rewards when the population distributions sequence is given by $\mu := (\mu_n)_{1 \leq n \leq N}$ and the initial state-action pair is sampled from a fixed distribution $\mu_0$,
\begin{align}\label{J_func}
    \begin{split}
        J_{\mu_0}(\pi, \mu) &:= \mathbb{E}_\pi \left[ \sum_{n=1}^N r_n(x_n, a_n, \mu_n) \right].
    \end{split}
\end{align}
Let us define a game with the same transition probability $p$, and with reward defined as 
\begin{equation}\label{reward_def}
    r_n(x_n, a_n, \mu_n) := - \nabla f_n(\mu_n)(x_n, a_n)
\end{equation}
for all $(x_n,a_n, \mu_n) \in \mathcal{X} \times \mathcal{A} \times \Delta_{\mathcal{X} \times \mathcal{A}}$.

\begin{proposition}\label{opt_NE_mono}
The strategy $\pi^*$ is a minimizer of Problem~\eqref{main_optimisation_problem} if and only if,  $(\mu^{\pi^*}, \pi^*)$ is a NE of the MFG defined with reward as in Equation~\eqref{reward_def}. Furthermore, this game is monotone (and strictly monotone if $f_n$ is strictly convex for all $1 \leq n \leq N$. See Definition~\ref{mono_state-action}).
\end{proposition}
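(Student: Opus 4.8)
The plan is to reduce the Nash equilibrium condition to the first-order (variational inequality) characterization of a minimizer of the convex Problem~\eqref{convex_main_optimisation_problem}, and then transport everything back to policies through the surjection of Proposition~\ref{opt_mu_equal_pi}. The starting point is the observation that, with the reward choice~\eqref{reward_def}, the game payoff~\eqref{J_func} evaluated against a frozen population flow $\mu^* = \mu^{\pi^*}$ becomes linear in the player's own induced flow:
\begin{equation*}
J_{\mu_0}(\pi, \mu^{\pi^*}) = \sum_{n=1}^N \sum_{x,a} \mu_n^\pi(x,a)\, r_n(x,a,\mu_n^{\pi^*}) = -\langle \nabla F(\mu^{\pi^*}), \mu^\pi\rangle.
\end{equation*}
Here I use that the expectation $\mathbb{E}_\pi$ defining $J$ is taken along the flow $\mu^\pi$ generated by the player's policy, whereas the reward argument is held fixed at $\mu^{\pi^*}$.

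First I would unfold the NE as the conjunction of the consistency condition $\mu^* = \mu^{\pi^*}$ and the best-response condition $\pi^* \in \argmax_\pi J_{\mu_0}(\pi, \mu^*)$. Using the display above together with Proposition~\ref{opt_mu_equal_pi}, which ensures $\pi \mapsto \mu^\pi$ is onto $\mathcal{M}_{\mu_0}$, the best-response condition is equivalent to
\begin{equation*}
\mu^{\pi^*} \in \argmin_{\nu \in \mathcal{M}_{\mu_0}} \langle \nabla F(\mu^{\pi^*}), \nu\rangle,
\end{equation*}
that is, to the variational inequality $\langle \nabla F(\mu^{\pi^*}), \nu - \mu^{\pi^*}\rangle \geq 0$ for all $\nu \in \mathcal{M}_{\mu_0}$. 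On the other side, since $\pi \mapsto \mu^\pi$ is surjective onto the convex set $\mathcal{M}_{\mu_0}$, Problem~\eqref{main_optimisation_problem} is equivalent to the convex Problem~\eqref{convex_main_optimisation_problem}; as $F$ is convex and $\mathcal{M}_{\mu_0}$ convex, $\pi^*$ minimizes~\eqref{main_optimisation_problem} if and only if $\mu^{\pi^*}$ satisfies exactly this same variational inequality. Matching the two characterizations gives the claimed equivalence, consistency holding by construction since the proposition pairs $\pi^*$ precisely with $\mu^{\pi^*}$.

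For monotonicity, I would substitute the reward~\eqref{reward_def} into the functional of Definition~\ref{mono_state-action} and compute, for any two flows $\mu, \mu' \in (\Delta_{\mathcal{X}\times\mathcal{A}})^N$,
\begin{equation*}
\sum_{n=1}^N \big\langle r_n(\cdot,\cdot,\mu_n) - r_n(\cdot,\cdot,\mu'_n),\; \mu_n - \mu'_n\big\rangle = -\sum_{n=1}^N \big\langle \nabla f_n(\mu_n) - \nabla f_n(\mu'_n),\; \mu_n - \mu'_n\big\rangle.
\end{equation*}
Each summand on the right is nonnegative by monotonicity of the gradient of the convex $f_n$, so the whole expression is nonpositive, which is the monotonicity condition; if every $f_n$ is strictly convex the gradient is strictly monotone and the inequality becomes strict whenever $\mu \neq \mu'$, yielding strict monotonicity.

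The main obstacle I anticipate is bookkeeping around the fixed-point nature of the NE rather than any deep difficulty: one must keep the reward frozen at $\mu^{\pi^*}$ while integrating the payoff against $\mu^\pi$, and ensure that the passage from \emph{optimal over all policies} to \emph{optimal over $\mathcal{M}_{\mu_0}$} genuinely invokes surjectivity (Proposition~\ref{opt_mu_equal_pi}) and not merely the definition of $\mu^\pi$. The only other delicate point is aligning the sign conventions between the reward-maximization payoff and the cost-minimization variational inequality, and matching them to whichever sign is fixed in Definition~\ref{mono_state-action}.
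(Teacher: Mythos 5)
Your proof is correct and takes essentially the same route as the paper's: both exploit the linearity of $J_{\mu_0}(\cdot,\mu^{\pi^*})$ in the player's induced flow to reduce the Nash-equilibrium condition to the variational inequality $\langle \nabla F(\mu^{\pi^*}), \nu - \mu^{\pi^*}\rangle \geq 0$ for all $\nu \in \mathcal{M}_{\mu_0}$, then use the surjectivity of Proposition~\ref{opt_mu_equal_pi} together with convexity of $F$ and of $\mathcal{M}_{\mu_0}$ to identify this inequality with minimality of Problem~\eqref{main_optimisation_problem}, and the monotonicity computation via monotone gradients of convex $f_n$ is identical. If anything, your explicit appeal to the first-order (necessary and sufficient) characterization of minimizers of a convex function over a convex set makes the converse direction tighter than the paper's ``following the same calculations backwards.''
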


This Proposition connects the optimality conditions of Problem~\eqref{main_optimisation_problem} and a NE, and shows that convexity and monotonicity are equivalent. If the optimization problem is (strictly) convex, the (unique) existence of an optimizer implies the (unique) existence of a NE. Thus, the notion of monotonicity when the reward depends on the state-action distribution provides the (unique) existence of a NE in the case of a potential game. 

\begin{proof}
The convexity of each $f_n$ for $1 \leq n \leq N$, and the convexity of the set $\mathcal{M}_{\mu_0}$ ensure the existence of a minimizer of Problem~\eqref{main_optimisation_problem} satisfying the optimality conditions. Also, Proposition~\ref{opt_mu_equal_pi} shows, for a fixed initial state-action distribution $\mu_0$, a surjection between the sets $(\Delta_\mathcal{A})^{\mathcal{X} \times N}$ and $\mathcal{M}_{\mu_0}$.

Let $(\mu^*, \pi^*)$, where $\mu^* = \mu^{\pi^*}$, be a Nash equilibrium. 

By definition, a Nash equilibrium  $(\mu^*, \pi^*)$ satisfies $\pi^* = \argmax_\pi J(\pi, \mu^*)$. In other words, 
\begin{equation}\label{J_inequality}
    J(\pi^*, \mu^{\pi^*}) \geq J( \pi, \mu^{\pi^*}) \quad \forall \pi \in (\Delta_\mathcal{A})^{\mathcal{X} \times N}.
\end{equation}
Expanding the terms of the sum of expected rewards and using the definition of reward in a potential game, we obtain that
\begin{align*}\label{J_as_sum_fn}
     J(\pi, \mu^{\pi^*}) &=  \mathbb{E}_\pi \left[ \sum_{n=1}^N r_n(x_n, a_n, \mu_n^{\pi^*}) \right] \\
     &= \sum_{n=1}^N \sum_{x \in\mathcal{X}, a \in A}r_n(x,a,\mu_n^{\pi^*})\mu_n^\pi(x,a)  \\
     &= \sum_{n=1}^N -\left\langle \nabla f_n(\mu_n^{\pi^*}), \mu_n^\pi \right\rangle.
\end{align*}
Similarly,
\begin{equation*}
    J(\pi^*, \mu^{\pi^*}) = \sum_{n=1}^N -\left\langle \nabla f_n(\mu_n^{\pi^*}), \mu_n^{\pi^*} \right\rangle.
\end{equation*}
Thus, the Nash equilibrium condition in Inequality~\eqref{J_inequality} entails
\begin{equation}\label{convexity_min}
    \sum_{n=1}^N  \big\langle \nabla f_n(\mu^{\pi^*}_n), \mu_n^{\pi^*} - \mu_n^{\pi} \big\rangle \leq 0.
\end{equation}

As $f_n$ is convex for all $n \in \{1,...,N\}$, this yields
\begin{equation}\label{minimizer}
    \sum_{n=1}^N f_n(\mu^{\pi^*}_n) - f_n(\mu^{\pi}_n) \leq 0.
\end{equation}

Thus, $\pi^*$ satisfies the optimality conditions of Problem~\eqref{main_optimisation_problem}. We then proved that if $(\pi^*, \mu^*)$ is a NE with $\mu^* = \mu^{\pi^*}$, then $\pi^*$ is an optimum of Problem~\eqref{main_optimisation_problem}.

On the other way around, if $\pi^*$ is a minimizer of Problem~\eqref{main_optimisation_problem} then it satisfies Inequality~\eqref{minimizer} for all $\pi \in (\Delta_\mathcal{A})^{\mathcal{X} \times N}$. Again, by convexity of $(f_n)_{1 \leq n \leq N}$, $\pi^*$ also satisfies Inequality~\eqref{convexity_min}. Following the same calculations backwards, we obtain that $\pi^*$ then satisfies Inequality~\eqref{J_inequality}, and by definition is then a NE. This concludes the first part of the proof.

The second part concerns the monotonicity of the game, defined below for the mean field game framework.

\begin{definition}[Monotonicity]\label{mono_state-action}
 According to \citet{MFG_original}, a game where the reward depends on the population's state-action distribution (sometimes called ``extended MFG'' in the literature, see \citet{extended_MFG}) is (strictly) monotone if for any state-action distributions $\nu, \nu' \in \Delta_{\mathcal{X} \times\mathcal{A}}$ with $\nu \neq \nu'$, 
\begin{equation*}
    \int_{\mathcal{X},\mathcal{A}} [r(x, a, \nu) - r(x,a, \nu')] d(\nu - \nu')(x,a) \leq 0, \; (< 0). 
\end{equation*}
\end{definition}

Back to the proof, consider $\mu, \mu'$ two distributions over $\mathcal{X} \times\mathcal{A}$. As the result should be true to all $n$, we omit the time step index for the computations. Recall that the reward is of the form $r(x,a,\mu) = -\nabla f(\mu) (x,a)$ for all $(x,a) \in \mathcal{X} \times \mathcal{A}$, with $f$ a convex function. Then,  
\begin{equation*}
\begin{split}
    \int_{\mathcal{X} \times \mathcal{A}} [r(x,a, \mu) - r(x,a, \mu')] d(\mu - \mu')(x,a) 
    =&   \int_{\mathcal{X} \times \mathcal{A}} [\nabla f(\mu')(x,a)  - \nabla f(\mu)(x,a)] d(\mu - \mu')(x,a) \\
    = &  \left\langle \nabla f(\mu') - \nabla f(\mu), \mu - \mu' \right\rangle \leq 0
    \end{split}
\end{equation*}
where the last inequality comes from the convexity of $f$.
\end{proof}
%%%%%%%%%%%%%%%%%%%%%%%%%%%%%%%%%%%%%%%%%%%%%%%%%%%%%%%%%%%%%%%%%%%%%%%%%%%%%%%

\end{document}